\documentclass[a4paper,10pt]{article}
\usepackage[english]{babel}
\usepackage{amsthm,amsmath,amssymb,bbm}
\usepackage{stackengine}
\usepackage{natbib}

\newtheorem{thm}{Theorem}[section]

\newtheorem{claim}[thm]{Claim}
\newtheorem{lemma}[thm]{Lemma}

\newtheorem{prop}[thm]{Proposition}
\newtheorem{defin}[thm]{Definition}
\newtheorem{rema}[thm]{Remark}
\newtheorem{corollary}[thm]{Corollary}

\def\E{{\mathbb{E}}}

\def\P{{\mathbb{P}}}
\def\R{{\mathbb{R}}}
\def\N{{\mathbb{N}}}

\def\F{{\cal{F}}}

\def\limt{\lim_{t\to\infty}}
\def\limt0{\lim_{t\to 0}}

\def\|{\,|\,}

\def\bn#1\en{\begin{align*}#1\end{align*}}
\def\bnn#1\enn{\begin{align}#1\end{align}}

\allowdisplaybreaks
\title{Stability of Compensated Jump Integrals under Quadratic Variation Convergence}
\author{Philip Kennerberg
\footnote{Philip Kennerberg  
Independent researcher, Sweden  
Email: pkennerberg@gmail.com}
} 
\date{}

\begin{document}
\maketitle
\begin{abstract}
We study the stability of compensated jump integrals under convergence in
quadratic variation alone. Let \(X\) and \(\{X^n\}_{n\ge1}\) be càdlàg
processes with jump measures \(\mu,\mu_n\) and predictable compensators
\(\nu,\nu_n\). Under the assumption
\[
[X^n-X]_t \to 0
\qquad\text{in probability},
\]
we establish ucp convergence of compensated jump integrals of the form
\[
\int_0^. \int_{\mathbb R} f_n(s,x)(\mu_n-\nu_n)(ds,dx)
\]
under predictable locally bounded linear growth conditions together with
locally uniform convergence assumptions on the integrands.

The proof relies on two new structural mechanisms. The first is a Threshold Isolation Principle, showing that quadratic variation convergence prevents jumps
from crossing suitably chosen moving threshold regions. The second is a compensator mass control mechanism, which constructs
localizations whose complements have arbitrarily small probability and on
which the large-jump compensator masses admit an almost surely summable
dominating series, uniformly over the converging sequence.

The results require neither semimartingale convergence, convergence of
characteristics, uniform tightness, nor global structural assumptions such as
independence, stationarity, or Markovianity. More broadly, they show that
quadratic variation convergence imposes a stronger rigidity on
the jump organization of càdlàg processes than one might initially expect.
\end{abstract}

\section{Introduction}

The stability theory of stochastic integrals and semimartingale functionals
has its origins in the structural developments of the general theory of
processes. During the 1960s and 1970s, foundational work of Meyer, Jacod,
Yor and others established the modern framework of random measures,
compensators and stochastic integration with respect to jump measures.
This led to the canonical decomposition theory underlying contemporary
stochastic calculus; see for example
\cite{MeyerBook,JacodShiryaev,HeWangYan}. Within this framework, compensated
jump integrals of the form
\[
\int_0^t\int_{\mathbb R}
f(s,x)(\mu-\nu)(ds,dx)
\]
play a central role. Here \(\mu\) denotes the jump measure associated with an underlying càdlàg
process and \(\nu\) its predictable compensator. They arise naturally in It\^o type decompositions,
semimartingale representations, L\'evy type models, stochastic control,
filtering, mathematical finance and limit theory
(\cite{JacodShiryaev,IkedaWatanabe,Protter}).

%At the structural level, the classical theory is remarkably complete. The
%existence and uniqueness of compensators, the construction of stochastic
%integrals with respect to compensated jump measures, and the associated local
%martingale properties are by now standard material
%\cite{JacodShiryaev,HeWangYan}. Consequently, once the classical structural theory had been established, the
%problem of recovering stability of compensated jump integrals from weak
%pathwise information alone appears to have received comparatively limited direct attention. Later developments instead tended to focus on
%stronger convergence frameworks, structural assumptions, or increasingly
%specialized probabilistic settings. Typical modern
%approaches impose semimartingale convergence, convergence of characteristics,
%tightness assumptions, uniform integrability, structural independence
%assumptions, or specific Markovian or L\'evy settings; see for example
%\cite{KurtzProtter,Memin,LiptserShiryaev}. In many situations,
%the jump structure itself is effectively assumed to be sufficiently controlled
%from the outset. Even in substantially more structured semimartingale settings, stability
%results typically rely on considerably stronger convergence machinery, such as
%convergence of characteristics, uniform tightness, semimartingale stability
%frameworks, or structural assumptions on the underlying jump dynamics
%\cite{KurtzProtter,Memin,Protter}.
At the structural level, the classical theory is remarkably complete. The
existence and uniqueness of compensators, the construction of stochastic
integrals with respect to compensated jump measures, and the associated
local martingale properties are by now standard material.

Subsequent stability results, however, have typically relied on
substantially stronger convergence frameworks or additional structural
assumptions, such as semimartingale convergence, convergence of
characteristics, uniform tightness, or specific Markovian or Lévy
settings; see for example Kurtz and Protter (1991); Mémin (1980);
Liptser and Shiryaev (1989); Protter (2005).

By contrast, the present work returns to a more fundamental question:

\begin{center}
\emph{
To what extent does convergence in quadratic variation alone force stability
of compensated jump integrals?
}
\end{center}

At first sight, convergence in quadratic variation does not appear
to provide enough information to control compensator structure
or predictable jump organization. Quadratic variation only controls the aggregated
squared jump discrepancy and contains no explicit information about
compensators, predictable jump times, accessibility structure, or the fine
organization of atomic compensator masses. In particular, one might expect predictable jump times to drift,
disappear, or exchange accessibility properties along converging
sequences, even when convergence in quadratic variation holds.

The results show that convergence in quadratic variation retains enough structural information to establish stability of compensated jump integrals. In particular, although quadratic variation contains no explicit compensator information, it nevertheless imposes a nontrivial structural constraint on the organization of jumps and predictable compensator mass. The mechanism behind this phenomenon is not based on classical tightness or characteristic arguments, but instead on a threshold rigidity principle which forces a local alignment of large predictable jump times and allows one to recover compensator mass control directly from quadratic variation convergence.

The core of the argument consists of two mechanisms. The first is the
``Threshold Isolation Principle'', showing that quadratic variation convergence
prevents jumps from crossing suitably chosen threshold regions. The second is a
compensator mass control mechanism. This mechanism uses the Threshold Isolation Principle to locally align the relevant large jumps occurring at predictable times, and then shows that the corresponding
atomic compensator masses cannot accumulate uncontrollably.

Together, these mechanisms allow one to derive stability of compensated jump
integrals under assumptions considerably weaker than those typically used in
the literature. 
Our main theorem establishes that
\[
[X^n-X]_t\to0
\quad\text{in probability}
\]
alone suffices to obtain ucp convergence of compensated jump integrals
under mild local assumptions on the integrands.

The main point is not merely that the assumptions used here are weaker than
those in the classical semimartingale stability literature. Rather, the
mechanism of control is different. Classical semimartingale convergence
yields ucp stability of stochastic integrals through the semimartingale
topology and associated characteristic control. The present result shows that, for compensated jump integrals,
quadratic variation convergence is sufficient to recover ucp
stability through an entirely different mechanism of control.

In the classical approach, jump structure is typically controlled through
characteristics, tightness, semimartingale convergence, or structural
assumptions on the driving process. In the present paper, no such
external control of the compensator or predictable jump structure is
imposed. Instead, the control is recovered internally from quadratic
variation convergence itself. The Threshold Isolation Principle provides the bridge from this limited pathwise information to a rigidity phenomenon for jumps.
This separation mechanism is then what makes the local alignment of
large jumps occurring at predictable times and the subsequent
compensator mass control possible.

The main result is particularly relevant in situations where one has access
to quadratic variation information but lacks a tractable semimartingale
topology or explicit control of characteristics. This includes
approximation schemes for jump processes, stability questions for
discontinuous stochastic systems under weak structural information,
and settings where only pathwise quadratic variation estimates are
available. In such situations, the theorem shows that compensated jump
integrals remain stable even without the classical machinery of
semimartingale convergence.

The mechanisms developed here may also be of independent interest.
In particular, the Threshold Isolation Principle and the compensator mass
control argument provide structural tools for extracting rigidity of
large-jump organization directly from quadratic variation convergence.
The techniques developed here may also be useful in related
stability problems for jump processes under weak structural
information.

The results suggest that quadratic variation
convergence imposes stronger restrictions on jump organization
than one might initially expect.

\section{Overview of the paper}

We begin with a brief preliminary section collecting the basic notation and
structural ingredients used throughout the paper. The subsequent section of
preparatory results develops the two main mechanisms underlying the proof:
the Threshold Isolation Principle and the compensator mass control mechanism.
The final section then combines these ingredients in the proof of the main
theorem.

The proof strategy depends crucially on two new techniques which build
progressively on one another.

The first is a Threshold Isolation Principle, showing that quadratic
variation convergence prevents jumps from crossing certain moving
threshold regions. This rigidity yields a threshold-separated
stabilization of the large predictable jump structure.

The second is a compensator mass control mechanism, which constructs
localizations with asymptotically full probability on which the large-jump
compensator masses admit an almost surely summable dominating series,
uniformly over the approximating sequence.
Rather than serving as isolated auxiliary observations, all lemmas and corollaries established below enter directly into the proof architecture of the main theorem.

In the main proof we first separate the error into an integrand
approximation part and a compensated jump measure approximation part.
The first term captures the discrepancy between \(f_n\) and \(f\)
evaluated against the fixed compensated jump measure \((\mu-\nu)\),
while the second term contains the difference between the compensated
jump structures \((\mu_n-\nu_n)\) and \((\mu-\nu)\), evaluated with the
same integrand \(f_n\).

The integrand approximation part is controlled by local uniform
convergence of \(f_n\) to \(f\), the local linear growth bound, and the
quadratic variation of the limiting process. The compensated jump
measure approximation part is then split into small-jump and large-jump
regions. The small-jump contributions are controlled through the
Threshold Isolation Principle together with quadratic variation bounds,
whereas the large-jump contributions are handled through the
threshold-separated alignment of the large jumps occurring at predictable
times and the compensator mass control mechanism.

Finally, the aligned predictable jump structure is combined with the compensator mass control mechanism, allowing one to pass from atomwise convergence to convergence of the full compensated jump integrals through a dominated convergence argument for series.

\section{Preliminaries}
We assume that all processes are defined on a common filtered probability space
\((\Omega,\mathcal F,\{\mathcal F_t\}_{t\geq0},\mathbb P)\), and that they are
adapted to the same filtration \(\{\mathcal F_t\}_{t\geq0}\), satisfying the
usual hypotheses. Given a càdlàg process \(X\) and a stopping time \(T\), we
write
\[
        X^T_t := X_{t\wedge T}.
\]
We also write
\[
        X^*_t := \sup_{s\leq t}|X_s|
\]
for the running supremum of \(X\).

\begin{defin}\label{loc}
A property of a stochastic process is said to hold locally if there exists a
sequence of stopping times \(T_k\uparrow\infty\) such that the property holds
for the stopped process \(X^{T_k}\) for each \(k\). It is said to hold
pre-locally if the corresponding property holds for \(X^{T_k-}\) for each
\(k\).
\end{defin}

The following notation for jump measures will be used frequently. Whenever a
càdlàg process \(X\) with jump measure \(\mu\) is considered, \(\nu\) denotes
a predictable compensator of \(\mu\).

\begin{defin}\label{calA}
Given a càdlàg process \(X\) with jump measure \(\mu\) and predictable
compensator \(\nu\), we define
\[
        \mathcal{A}:=\{(s,\omega):s\leq t,\ \nu(\omega,\{s\},\mathbb R)>0\}.
\]
Thus \(A\) is the set of predictable times at which the compensator has an
atom.
\end{defin}

The following proposition is a special case of Proposition ~1.17(b) in
\cite{JacodShiryaev}.

\begin{prop}
There exists a version of \(\nu\) such that
\[
        \nu(\{s\},\mathbb R)\leq 1,\qquad s\leq t,
\]
and such that \(A\) is a thin set exhausted by a sequence of predictable
stopping times.
\end{prop}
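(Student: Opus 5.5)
The plan is to follow the standard route in Jacod--Shiryaev. Since \(\mu\) is an integer-valued random measure, it has at most one atom per time point. The \emph{first step} is to fix any predictable compensator \(\nu\) of \(\mu\) and consider the predictable process
\[
a_s := \nu(\{s\}\times\mathbb R).
\]
By the defining property of the compensator, applied to the predictable integrand \(W(\omega,r,x)=\mathbbm 1_{\{r=T\}}\) with \(T\) a predictable time, we get
\[
\mathbb E\big[a_T\mathbbm 1_{\{T<\infty\}}\big]=\mathbb E\big[\mu(\{T\}\times\mathbb R)\mathbbm 1_{\{T<\infty\}}\big].
\]
Localising with a sequence making \(\nu\) \(\sigma\)-finite, this identity conditions to \(a_T=\mathbb E[\mu(\{T\}\times\mathbb R)\,|\,\mathcal F_{T-}]\) on \(\{T<\infty\}\). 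Since \(\mu(\{T\}\times\mathbb R)\in\{0,1\}\), it follows that \(a_T\le 1\) almost surely.

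The \emph{second step} passes from predictable times to all times, and this is where the main difficulty sits. I would use that \(\{a>1\}\) is a predictable set and apply the predictable section theorem. If this set had a nonevanescent projection on \(\Omega\), there would be a predictable time \(T\) with graph in \(\{a>1\}\) and \(\mathbb P(T<\infty)>0\), contradicting step one. Hence \(\{a>1\}\) is evanescent. I would then modify \(\nu\) on this evanescent predictable set, replacing it there by \(0\) (or by \(\nu\) restricted off the set). This does not change \(\nu\) as a compensator, since the compensator is unique up to a null set. The resulting version satisfies \(\nu(\{s\},\mathbb R)\le1\) for all \(s\), in particular for \(s\le t\).

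The \emph{third step} concerns \(\mathcal A=\{a>0\}\). It is a predictable set, so it remains to show that it is thin. Localising so that \(\nu([0,t]\times\mathbb R)\) is finite (or using \(\nu(ds,\{|x|>\varepsilon\})\) together with the sum of squares for small jumps), the sum \(\sum_{s\le t}a_s\) is finite. Hence for each \(\varepsilon\) the set \(\{a>1/k\}\cap[0,t]\) has finitely many points per \(\omega\), and so \(\mathcal A\) has countable sections. A predictable set with countable sections is exhausted by a sequence of predictable times, by the standard result (Jacod--Shiryaev I.2.23). Concretely, for each \(k\) one takes successive debuts of \(\{a>1/k\}\); these are predictable because the set is predictable and the debuts' graphs lie in it. Finiteness of \(\nu\) need not hold globally, but \(\sigma\)-finiteness along a predictable partition suffices, since countable unions of thin sets are thin.
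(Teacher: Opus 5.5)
Your Steps 1 and 2 are correct, and they are the standard argument behind Jacod--Shiryaev, Proposition II.1.17(b). The paper itself offers nothing more than a citation of that result.

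\textbf{The gap is in Step 3.} You claim one can localise so that $\nu([0,t]\times\mathbb R)<\infty$, hence $\sum_{s\le t}a_s<\infty$ and $\{a>1/k\}\cap[0,t]$ is finite. This is false. Take the deterministic process
\[
X_s=\sum_{n\ge1}2^{-n}\,1_{\{s\ge 1-2^{-n}\}}+1_{\{s\ge 2\}} .
\]
Its jump times are deterministic, hence predictable, so $\nu=\mu$ and $a_s=1$ at every $s=1-2^{-n}$.
\begin{itemize}
\item Thus $\nu([0,1]\times\mathbb R)=\infty$ identically, and no localising sequence $T_k\uparrow\infty$ can cure this. Note that $\sum_s(\Delta X_s)^2<\infty$, so this happens inside the paper's quadratic-variation setting.
\item $\{a>1/2\}$ has infinitely many points in $[0,1]$.
\item Your concrete construction breaks down: the successive debuts of $\{a>1/2\}$ are $1-2^{-m}\uparrow 1$, and the predictable jump at time $2$ is never reached.
\end{itemize}
If you instead rely on the general statement that a predictable set with countable sections is a countable union of predictable graphs, that is a considerably deeper theorem than you need. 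Countable sections would then still have to be justified by a correct argument.

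\textbf{The fix.} The truncation you mention only parenthetically must carry the argument.
\begin{itemize}
\item For $\varepsilon>0$, $\mu([0,\cdot]\times\{|x|>\varepsilon\})$ is an adapted counting process with unit jumps. It is finite on compacts, since a c\`adl\`ag path has finitely many jumps exceeding $\varepsilon$ on $[0,t]$. Hence it is locally bounded, and its compensator $\nu([0,\cdot]\times\{|x|>\varepsilon\})$ is finite-valued.
\item Set $a^{\varepsilon}_s:=\nu(\{s\}\times\{|x|>\varepsilon\})$. Then the predictable set $\{a^{\varepsilon}>1/j\}$ has at most $j\,\nu([0,t]\times\{|x|>\varepsilon\})$ points in $[0,t]$.
\item Its successive debuts are attained, so their graphs lie in the set. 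They are therefore predictable times, and they exhaust the set.
\item Since $\mathbb E[\nu(\mathbb R_+\times\{0\})]=\mathbb E[\mu(\mathbb R_+\times\{0\})]=0$, you may take a version with $\nu(\mathbb R_+\times\{0\})=0$. Then $\mathcal A=\bigcup_{i,j}\{a^{1/i}>1/j\}$, a countable union of sets exhausted by predictable times.
\end{itemize}
Equivalently, $\mathcal A$ is the union of the jump sets of the predictable increasing processes $1_{\{|x|>1/i\}}*\nu$.

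Your sum-of-squares alternative needs $\sum_{s\le t}(\Delta X_s)^2<\infty$. That holds in the paper's setting but not for an arbitrary c\`adl\`ag path, so the $\{|x|>\varepsilon\}$ truncation is the one to use in general.
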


Throughout the paper we work with such a version of the compensator.

\begin{rema}
Throughout the paper we use the convention \(\inf\emptyset=\infty\).
\end{rema}
The following elementary estimate will be used repeatedly in the proof of the
main theorem.
\begin{lemma}\label{triangle}
Suppose that \(X^1,\ldots,X^N\) and \(\sum_{k=1}^m X^k\), \(m\leq N\), all admit quadratic variation. Then
\[
        \left[\sum_{k=1}^N X^k\right]_t
        \leq
        \left(\sum_{k=1}^N [X^k]_t^{1/2}\right)^2 .
\]
\end{lemma}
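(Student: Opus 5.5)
The plan is to prove the two-term case first and then induct on $N$. The two-term case is the Kunita--Watanabe (Cauchy--Schwarz) inequality for the quadratic covariation. For two processes $Y,Z$ admitting quadratic variation whose sum also admits quadratic variation, define the covariation by polarization,
\[
[Y,Z]_t:=\tfrac12\bigl([Y+Z]_t-[Y]_t-[Z]_t\bigr),
\]
so that $[Y+Z]_t=[Y]_t+2[Y,Z]_t+[Z]_t$.

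The key step is to show $|[Y,Z]_t|\le [Y]_t^{1/2}[Z]_t^{1/2}$. I would derive this from the discrete approximations. Along a sequence of partitions $\pi$ of $[0,t]$ with mesh tending to zero, the sums
\[
S^\pi(Y)=\sum (\Delta_i Y)^2,\qquad S^\pi(Z),\qquad S^\pi(Y+Z)
\]
converge in probability to $[Y]_t$, $[Z]_t$ and $[Y+Z]_t$. It follows that
\[
S^\pi(Y,Z):=\sum \Delta_i Y\,\Delta_i Z=\tfrac12\bigl(S^\pi(Y+Z)-S^\pi(Y)-S^\pi(Z)\bigr)\to [Y,Z]_t .
\]
For each partition the finite Cauchy--Schwarz inequality gives
\[
|S^\pi(Y,Z)|\le S^\pi(Y)^{1/2}S^\pi(Z)^{1/2}.
\]
Passing to an a.s. convergent subsequence then yields the inequality in the limit almost surely. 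Consequently
\[
[Y+Z]_t\le [Y]_t+2[Y]_t^{1/2}[Z]_t^{1/2}+[Z]_t=\bigl([Y]_t^{1/2}+[Z]_t^{1/2}\bigr)^2 .
\]
(Alternatively, one can use nonnegativity of $[Y+\lambda Z]$ for rational $\lambda$ together with bilinearity, but this requires $Y+\lambda Z$ to admit quadratic variation. The partition-sum route avoids that extra hypothesis, which is why I prefer it.)

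For the induction, set $Y=\sum_{k=1}^{N-1}X^k$ and $Z=X^N$. Both admit quadratic variation by hypothesis, and so does $Y+Z$, since the hypothesis covers partial sums with $m\le N$. The two-term case gives
\[
[Y+Z]_t^{1/2}\le [Y]_t^{1/2}+[X^N]_t^{1/2},
\]
and the induction hypothesis bounds $[Y]_t^{1/2}\le\sum_{k<N}[X^k]_t^{1/2}$. Squaring the resulting bound on $[Y+Z]_t^{1/2}$ completes the argument.

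The only delicate point is the meaning of ``admit quadratic variation''. If it means existence as a limit in probability of the partition sums along refining partitions, the subsequence argument above suffices. If instead the processes are semimartingales, the inequality is simply the classical Kunita--Watanabe inequality, as in Protter, applied at the fixed time $t$. I would state the proof in the partition-sum form, since that covers both readings.
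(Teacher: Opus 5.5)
Your proof is correct, but it is organized differently from the paper's. The paper does not induct. For a fixed partition it applies Minkowski's inequality in $\ell^2$ to all $N$ increment sequences at once,
\[
\Bigl(\sum_i\Bigl(\sum_{k=1}^N a_i^{(k)}\Bigr)^2\Bigr)^{1/2}\le\sum_{k=1}^N\Bigl(\sum_i (a_i^{(k)})^2\Bigr)^{1/2},
\]
then squares and passes to the limit once. You instead prove the two-term bound by polarization and Cauchy--Schwarz for the discrete covariation $S^\pi(Y,Z)$, pass to the limit, and then induct, taking a limit again at every step.

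The paper's route is shorter. Strictly speaking, it only needs the $N$ summands and the full sum $\sum_{k=1}^N X^k$ to admit quadratic variation along a common sequence of partitions. The hypothesis on intermediate partial sums is never used there. Your route is exactly where that hypothesis is needed, since each induction step requires $[\sum_{k\le N-1}X^k]_t$ to exist as a limit.

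In return, your route gives the covariation estimate $|[Y,Z]_t|\le[Y]_t^{1/2}[Z]_t^{1/2}$ as a by-product. It is also explicit about extracting an a.s.\ convergent common subsequence from convergence in probability, a step the paper compresses into ``passing to the limit along partitions''. Note that the polarization detour is unnecessary even in the two-term case. The bound $S^\pi(Y+Z)^{1/2}\le S^\pi(Y)^{1/2}+S^\pi(Z)^{1/2}$ is just the triangle inequality in $\ell^2$, and applying it with $N$ terms removes the need for induction altogether.
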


\begin{proof}
Let \(\pi=\{0=t_0<\cdots<t_n=t\}\) be a partition of \([0,t]\), and set
\[
a_i^{(k)}:=X^k_{t_i}-X^k_{t_{i-1}}.
\]
Then Minkowski's inequality in \(\ell^2\) gives
\[
\left(
\sum_{i=1}^n
\left(
\sum_{k=1}^N a_i^{(k)}
\right)^2
\right)^{1/2}
\leq
\sum_{k=1}^N
\left(
\sum_{i=1}^n
(a_i^{(k)})^2
\right)^{1/2}.
\]
Squaring both sides yields
\[
\sum_{i=1}^n
\left(
\sum_{k=1}^N
(X^k_{t_i}-X^k_{t_{i-1}})
\right)^2
\leq
\left(
\sum_{k=1}^N
\left(
\sum_{i=1}^n
(X^k_{t_i}-X^k_{t_{i-1}})^2
\right)^{1/2}
\right)^2.
\]
Passing to the limit along partitions yielding the quadratic variations
gives the result.
\end{proof}

\section{Main result}
For a function \(g:\mathbb R\to\mathbb R\), we write
\(\mathrm{disc}(g)\) for its set of discontinuity points.
\begin{thm}[Stability of compensated jump functionals under quadratic variation convergence]
\label{thm:main-compensated-jump-stability}
Let \(X\) and \(\{X^n\}_{n\ge 1}\) be real-valued càdlàg processes on
\([0,t]\), with jump measures \(\mu\) and \(\mu^n\), and predictable
compensators \(\nu\) and \(\nu^n\), respectively. Assume that 
\[
        [X^n-X]_t \longrightarrow 0
        \quad\text{in probability.}
\]

Let \(f\) and \(f_n\), \(n\ge 1\), be predictable functions on
\(\Omega\times[0,t]\times\mathbb R\), 
such that the compensated jump integrals
\[
        M_t
        :=
        \int_0^t\int_{\mathbb R}
        f(s,x,\omega)(\mu-\nu)(ds,dx),
\]
and
\[
        M^n_t
        :=
        \int_0^t\int_{\mathbb R}
        f_n(s,x,\omega)(\mu^n-\nu^n)(ds,dx)
\]
are well-defined purely discontinuous local martingales.
Assume that $\P\left(\exists s\le t: \Delta X_s(\omega)\in \mathrm{disc}(f(s,.,\omega))\right)=0$.
Assume moreover that, for every \(a>0\), there exists a nonnegative
predictable process \(C^a\), locally bounded on \([0,t]\), such that, for
all \(n\ge1\),
\[
|f_n(s,x,\omega)|+|f(s,x,\omega)|
\le C^a_s(\omega)|x|,
\qquad s\le t,\ |x|\le a .
\]
Assume also that there exists a set \(\Omega_0\in\mathcal F\) with
\(\mathbb P(\Omega_0)=1\) such that, for every \(\omega\in\Omega_0\), every
\(s\leq t\), and every compact set \(K\subset\mathbb R\),
\[
        \sup_{x\in K}|f_n(s,x,\omega)-f(s,x,\omega)|\longrightarrow 0 .
\]
%and that outside a zero set in $\Omega$ for every fixed \(s\le t\),
%\[
%        f_n(s,.,\omega)\longrightarrow f(s,.,\omega)
%        \quad\text{uniformly on } K
%\]

Then
\[
        M^n \longrightarrow M
        \quad\text{ucp on }[0,t].
\]
\end{thm}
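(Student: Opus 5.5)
We split the error as
\[
M^n-M=\int_0^\cdot\!\!\int (f_n-f)\,d(\mu-\nu)\;+\;\Big(\int_0^\cdot\!\!\int f_n\,d(\mu^n-\nu^n)-\int_0^\cdot\!\!\int f_n\,d(\mu-\nu)\Big)=:I^n+J^n,
\]
and prove ucp convergence of each piece separately. Throughout, we localize so that $C^a$ is bounded by a constant $c$ on $[0,T_k]$. Because $[X^n-X]_t\to0$ in probability, we may also pass to subsequences along which $[X^n-X]_t\to0$ almost surely; ucp convergence then follows by the subsequence criterion.

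\textbf{Step 1: the term $I^n$.} We split $f_n-f$ at a level $\varepsilon$ into a small-jump part on $\{|x|\le\varepsilon\}$ and a large-jump part on $\{|x|>\varepsilon\}$.
\begin{itemize}
\item For the small-jump part, the linear growth bound gives $|f_n-f|\le 2c|x|$. The Lenglart/Doob inequality applied to the predictable quadratic variation (or $[\cdot]$) then bounds the sup by a multiple of $\sum_{s\le t}|\Delta X_s|^2\mathbbm 1_{|\Delta X_s|\le\varepsilon}$, which tends to $0$ as $\varepsilon\to0$ uniformly in $n$.
\item The large-jump part has only finitely many jumps of $X$ on $[0,t]$, and all of them take values in a random compact set. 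Locally uniform convergence gives $\mu$-integral convergence pathwise.
\item For the $\nu$-part of the large-jump piece, we use dominated convergence against $\nu(\cdot\cap\{|x|>\varepsilon\})$, which is locally integrable after localization. The domination is provided by $c|x|$ on $\{|x|\le a\}$ with $a$ large; the region $\{|x|>a\}$ is removed by stopping at the first jump of size exceeding $a/2$, since $\P(\sup|\Delta X|>a/2)\to0$ as $a\to\infty$.
\end{itemize}

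\textbf{Step 2: small jumps in $J^n$.} For $J^n$ we want to compare jumps of $X^n$ with jumps of $X$, using $\sum_s|\Delta X^n_s-\Delta X_s|^2\le[X^n-X]_t\to0$. This bound gives $\sup_s|\Delta X^n_s-\Delta X_s|\to0$, so every large jump of $X^n$ is eventually matched, at the same time, by a jump of $X$ of nearly the same size. The threshold problem is that a jump near a cutoff $\varepsilon$ can sit on different sides for $X$ and $X^n$. To avoid this, we choose a (random, or deterministic from a countable dense set) threshold $\varepsilon$ such that $X$ has no jump with modulus in $[\varepsilon/2,2\varepsilon]$ on $[0,t]$ with high probability. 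This is possible since $X$ has finitely many jumps above any level, so all but countably many annuli are empty; that is the Threshold Isolation idea. Then, eventually, no jump of $X^n$ lies in $(\frac34\varepsilon,\frac32\varepsilon)$. With this choice, the small-jump compensated integrals of $X^n$ and of $X$ are each controlled by $c^2\sum|\Delta|^2\mathbbm 1_{|\Delta|\le\varepsilon}$. By Lemma~\ref{triangle}, the small-jump quadratic variation of $X^n$ is at most $\big([X]^{1/2}_{\varepsilon\text{-small}}+[X^n-X]_t^{1/2}\big)^2$, which is small. Lenglart's inequality then gives sup-smallness uniformly in $n$.

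\textbf{Step 3: large jumps in $J^n$ (main obstacle).} The $\mu$-parts converge pathwise, because there are finitely many matched jumps and $f_n(s,\Delta X^n_s)-f_n(s,\Delta X_s)\to0$. Here we use locally uniform convergence to $f$ together with continuity of $f$ at $\Delta X_s$, which holds almost surely by the disc assumption; we do not need equicontinuity of $f_n$ for this.

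The hard part is the compensator terms $\int f_n\mathbbm 1_{|x|>\varepsilon}\,d\nu^n$ versus $\int f_n\mathbbm 1_{|x|>\varepsilon}\,d\nu$. We plan to write the large-jump counting measures as counting processes $N^n$ and $N$. Jump matching gives $N^n=N$ eventually, pathwise, on $[0,t]$; the stopped processes then agree after localization at times where they differ, and those times have vanishing probability. Equality of the counting measures for large $n$ does not by itself give equality of compensators. Instead we compare the martingales directly: $\int g\,d(\mu^n-\nu^n)-\int g\,d(\mu-\nu)$ restricted to large jumps is a local martingale whose jump part is eventually zero. Its predictable part is then a predictable finite-variation local martingale, and hence constant, on the event of agreement — but only on a stochastic interval, which must be made predictable.

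We expect this to be the crux. We would construct stopping times $\tau_n$, the first disagreement time of the large-jump measures, with $\P(\tau_n<t)\to0$. We would then control the atomic compensator masses at predictable times up to $\tau_n$ via the bound $\nu^n(\{s\},\mathbb R)\le1$ from the Proposition, together with an almost surely summable dominating series for $\sum_s\nu^n(\{s\},|x|>\varepsilon)$ that is uniform in $n$. Dominated convergence for series then transfers the atomwise convergence to convergence of the full compensator integrals.
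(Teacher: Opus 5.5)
There is a genuine gap, and it cannot be closed under the stated hypotheses: the statement is false as written, and the failure sits exactly in your treatment of $\{|x|>a\}$. In Step 1 you say this region ``is removed by stopping at the first jump of size exceeding $a/2$''. Stopping at a jump time kills the $\mu$-part on $\{|x|>a\}$, but not the $\nu$-part. The process $\int_0^{\cdot\wedge\tau}\int_{|x|>a}(f_n-f)\,d\nu$ is predictable and accumulates before any large jump occurs. Nothing gives an $n$-uniform, $\nu$-integrable bound for $f_n$ on that region: the growth bound holds only on bounded sets, and $|x|$ need not be $\nu$-integrable at infinity. The same issue reappears in Step 3, where $\int f_n\mathbf{1}_{\{|x|>\varepsilon\}}\,d\nu^n$ has no upper cutoff.

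Here is a counterexample. Let $X^n:=X$ be a compound Poisson process with rate $1$ and jump density $x^{-2}\mathbf{1}_{[1,\infty)}(x)$ in its own filtration, so $\nu(ds,dx)=ds\,x^{-2}\mathbf{1}_{[1,\infty)}(x)\,dx$. Put $f:=0$ and $f_n(x):=|x|\mathbf{1}_{\{n<|x|\le 2n\}}$. Then $[X^n-X]\equiv0$, $|f_n|+|f|\le|x|$ (so $C^a\equiv1$), $f_n=0$ on $[-R,R]$ for $n\ge R$, and $f$ is continuous. Since $\int_0^t\int|f_n|\,d\nu=t\ln 2$, the process $M^n_u=\sum_{s\le u}f_n(\Delta X_s)-u\ln 2$ is a martingale. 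But $X$ has finitely many jumps on $[0,t]$, so $M^n_t\to-t\ln 2$ a.s., while $M\equiv0$. Here $J^n\equiv0$, and for $n\ge a$ your $I^n$ is exactly the $\{|x|>a\}$ piece you discarded.

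The same example has $[M^n-M]_t=\sum_{s\le t}f_n(\Delta X_s)^2\to0$ a.s. So passing from quadratic variation to ucp is legitimate only for pieces with uniformly bounded jumps. The paper's proof has the same hole. On $B^n_a$, the first inequality in \eqref{grisny2} drops the atoms of $\nu,\nu^n$ on $\{|x|>a\}$. The closing BDG step applies dominated convergence to $\mathbb E\bigl[[M^n-M]^{1/2}_{T^n_k}\bigr]$, although the jump at $T^n_k$ is unbounded. Some extra $n$-uniform tail hypothesis is needed to dispose of this region, for instance $\lim_{a\to\infty}\sup_n\mathbb P\bigl(\int_0^t\int_{|x|>a}|f_n|\,d\nu^n>\eta\bigr)=0$ for every $\eta>0$.

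Two further steps need repair even with such a hypothesis.

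In Step 2, fixed-ratio annuli are not ``all but countably many empty''. A single jump of modulus $j$ lies in $[\varepsilon/2,2\varepsilon]$ for every $\varepsilon\in[j/2,2j]$. For an $\alpha$-stable $X$, almost surely every such annulus with small $\varepsilon$ is occupied, so the threshold you describe need not exist. What works is to fix the cutoff first and let the isolating window shrink, as in Lemma~\ref{lem:thres}. There the windows of half-width $\delta(k,r)$ are disjoint subsets of $[r/2,r)$, so only finitely many are hit. This costs nothing, because $\sup_s|\Delta(X^n-X)_s|\to0$.

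In Step 3, the large-jump difference does not have an eventually vanishing jump part, since $f_n(s,\Delta X^n_s)\neq f_n(s,\Delta X_s)$ at matched times. Also, a summable domination of the atoms $\nu^n(\{s\},\{|x|>\varepsilon\})$ controls only the jumps of the compensator difference, not its diffuse part. Instead, run the argument on the quadratic variation of the compensated difference on $\{\varepsilon<|x|\le a\}$. Its jumps are $\int_{\varepsilon<|x|\le a}f_n\,(\mu^n-\mu)(\{s\},dx)-\int_{\varepsilon<|x|\le a}f_n\,(\nu^n-\nu)(\{s\},dx)$, so it never sees diffuse compensator mass. Then:
\begin{itemize}
\item get atomwise convergence at predictable times from $\int f_n(\nu-\nu^n)(\{s\},dx)=\mathbb E\bigl[\int f_n(\mu-\mu^n)(\{s\},dx)\mid\mathcal F_{s-}\bigr]$ and conditional dominated convergence;
\item sum over atoms using an $n$-uniform summable domination of them (the role of Lemma~\ref{complemma});
\item only then convert to ucp by Lenglart, using that these jumps are bounded by $4ca$ after localizing $C^a\le c$.
\end{itemize}
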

%\begin{rema}
%By the local linear growth assumption, for every \(a>0\),
%\[
%        |f_n(s,x,\omega)|^2
%        \le C_a^2 x^2,
%        \qquad
%        |f(s,x,\omega)|^2
%        \le C_a^2 x^2,
%        \qquad |x|\le a.
%\]
%Since \(X^n\) and \(X\) admit quadratic variation on \([0,t]\),
%\[
%\int_0^t\int_{|x|\le a} x^2\,\mu_n(ds,dx)<\infty,
%\qquad
%\int_0^t\int_{|x|\le a} x^2\,\mu(ds,dx)<\infty
%\quad\text{a.s.}
%\]
%Hence
%\[
%\int_0^t\int_{|x|\le a} |f_n(s,x,\omega)|^2\,\mu_n(ds,dx)<\infty,
%\qquad
%\int_0^t\int_{|x|\le a} |f(s,x,\omega)|^2\,\mu(ds,dx)<\infty
%\quad\text{a.s.}
%\]
%The corresponding truncated compensated jump integrals are therefore
%well-defined purely discontinuous local martingales. The same local square-integrability holds with \(\mu_n,\mu\) replaced by
%their compensators \(\nu_n,\nu\), after localization, by the compensator
%identity.
%\end{rema}
\begin{rema}
Fix \(a>0\), and let \(C^a\) be the predictable locally bounded envelope
from the assumptions of Theorem 4.1. Define
\[
s_{a,m}:=\inf\{s>0:C^a_s>m\}.
\]
Since \(C^a\) is locally bounded on \([0,t]\), we have
\[
\mathbb P(s_{a,m}\le t)\to0 .
\]
On \([0,s_{a,m})\),
\[
|f_n(s,x,\omega)|+|f(s,x,\omega)|
\le m|x|,
\qquad |x|\le a .
\]
Since \(X^n\) and \(X\) admit quadratic variation on \([0,t]\),
\[
\int_0^t\int_{|x|\le a}x^2\,\mu_n(ds,dx)<\infty,
\qquad
\int_0^t\int_{|x|\le a}x^2\,\mu(ds,dx)<\infty
\quad\text{a.s.}
\]
Hence, after localization,
\[
\int_0^{t\wedge s_{a,m}}\int_{|x|\le a}
|f_n(s,x,\omega)|^2\,\mu_n(ds,dx)<\infty,
\]
and similarly for \(f\) and \(\mu\). By the compensator identity, the same
local square-integrability holds with \(\mu_n,\mu\) replaced by
\(\nu_n,\nu\). Therefore the corresponding truncated compensated jump
integrals belong locally to the natural square-integrable martingale regime
associated with quadratic variation.
\end{rema}
%The linear growth condition at the origin is not merely technical. It is the
%natural compatibility condition with quadratic variation. Indeed, quadratic
%variation controls the small-jump scale
%\[
%\sum_{s\le t}(\Delta X_s)^2,
%\]
%but it does not control smaller powers of the jumps. For example, let
%\[
%\nu(ds,dx)=ds\,1_{(0,1)}(x)x^{-1-\beta}\,dx,
%\qquad 0<\beta<2,
%\]
%be the compensator of a pure-jump L\'evy process on \([0,t]\). Then
%\[
%\int_0^t\int_0^1 x^2\,\nu(ds,dx)<\infty,
%\]
%so the process has finite quadratic variation on \([0,t]\). However, for
%\[
%f(x)=x^\alpha,
%\qquad 2\alpha\le\beta,
%\]
%one has
%\[
%\int_0^t\int_0^1 f(x)^2\,\nu(ds,dx)=\infty.
%\]
%Moreover, since
%\[
%\int_0^t\int_0^1 f(x)^2\,\mu(ds,dx)
%=
%\sum_{0<s\le t} |f(\Delta X_s)|^2
%\]
%almost surely, the failure already occurs at the level of the realized
%small-jump contributions themselves. Thus, without a linear bound near the
%origin, the compensated jump integral may fail to belong to the natural
%square-integrable small-jump regime even though the underlying process has
%finite quadratic variation. Moreover,
%\[
%[M]_t
%=
%\int_0^t\int_{\mathbb R} f(s,x)^2\,\mu(ds,dx),
%\qquad
%\langle M\rangle_t
%=
%\int_0^t\int_{\mathbb R} f(s,x)^2\,\nu(ds,dx),
%\]
%for the compensated jump martingale
%\[
%M_t=\int_0^t\int_{\mathbb R} f(s,x)(\mu-\nu)(ds,dx).
%\]
%Hence, if
%\[
%\int_0^t\int_0^1 f(x)^2\,\nu(ds,dx)=\infty,
%\]
%then the compensated jump integral fails even to belong to the natural
%locally square-integrable martingale regime associated with quadratic
%variation.
The local linear growth condition at the origin is not merely technical. It
is the natural scaling condition forced by the requirement that the
compensated jump integral remain in the quadratic-variation regime.

To see this, consider the totally inaccessible case, where
\(\nu(\{s\},dx)=0\) for all \(s>0\). If
\[
        M_t=\int_0^t\int_{\mathbb R} f(s,x)(\mu-\nu)(ds,dx),
\]
then
\[
        [M]_t=\sum_{s\le t}|f(s,\Delta X_s)|^2 .
\]
Thus finite quadratic variation of \(M\) requires the transformed jump
sequence \(\{f(s,\Delta X_s)\}_{s\le t}\) to be square summable.

Now suppose that near the origin \(f(x)\) behaves like \(|x|^\alpha\) with
\(\alpha<1\). Then the small-jump contribution to \([M]_t\) is measured on
the scale
\[
        \sum_{s\le t}|\Delta X_s|^{2\alpha}.
\]
For càdlàg processes with finite quadratic variation but infinite
\(p\)-variation for some \(p<2\alpha\), this quantity may be infinite even
though
\[
        \sum_{s\le t}(\Delta X_s)^2<\infty .
\]
Hence quadratic variation of the underlying process does not in general
control sublinear transformations of infinitely many small jumps.

The linear growth condition \(f(s,x)=O(x)\) at the origin is therefore not an
artifact of the proof. It is precisely the scaling condition that keeps the
transformed small-jump contribution on the same quadratic-variation scale as
the underlying process.
\begin{rema}
The local uniform convergence assumption in the $x$ variable cannot in general
be replaced by pointwise convergence.

Indeed, let $N$ be a Poisson process with intensity $1$, and define
\[
X_t=N_t,
\qquad
X^n_t=\left(1+\frac1n\right)N_t.
\]
Then
\[
[X^n-X]_t=\frac1{n^2}N_t\to0
\qquad\text{a.s.}
\]

Now define
\[
f\equiv0,
\qquad
f_n(x)=1_{\{1+\frac1n\}}(x).
\]
Then $f_n(x)\to f(x)$ for every fixed $x\in\mathbb R$, and the functions
$f_n$ satisfy a uniform local linear growth condition.

However, if $\mu_n,\nu_n$ and $\mu,\nu$ denote the jump measures and their
compensators associated with $X^n$ and $X$, respectively, then
\[
\nu_n(ds,dx)=ds\,\delta_{1+\frac1n}(dx),
\qquad
\nu(ds,dx)=ds\,\delta_1(dx),
\]
and therefore
\[
\int_0^t\int_{\mathbb R}f_n(x)(\mu_n-\nu_n)(ds,dx)
=
N_t-t,
\]
while
\[
\int_0^t\int_{\mathbb R}f(x)(\mu-\nu)(ds,dx)=0.
\]
Hence the conclusion fails under pointwise convergence alone.
\end{rema}
\section{Preparatory Results}
The first key technique, referred to as ``Threshold Isolation Principle'', shows that for
every \(r>0\) one can construct threshold regions below \(r\) which are
eventually avoided by the jump sizes of both \(X\) and \(\{X^n\}_{n\in\N}\)
with asymptotically full probability. This mechanism allows one to isolate and
control the small-jump contribution to the compensated jump integrals.

\begin{lemma}[Threshold Isolation Principle]
\label{lem:thres}
Let $X$ and $\{X^n\}_{n\ge1}$ be c\`adl\`ag processes on $[0,t]$ such that
\[
[X^n-X]_t \to 0
\qquad \text{in probability}.
\]
Fix $r>0$, and for each $k\ge1$ define
\[
\delta(k,r):=\frac{r}{2^{k+2}},
\qquad
L(r,k):=r\Bigl(1-\frac{3}{2^{k+2}}\Bigr),
\]
and
\[
A_k(r)
:=
\Bigl\{
\omega:
\bigl||\Delta X_s(\omega)|-L(r,k)\bigr|>\delta(k,r)
\text{ for all } s\le t
\Bigr\}.
\]
Then
\[
\mathbb P\Bigl(\bigcup_{k\ge1}A_k(r)\Bigr)=1.
\]

Moreover, for each fixed $k\ge1$, if we define
\[
B_n(k,r)
:=
\Bigl\{
\sup_{s\le t}|\Delta(X^n-X)_s|<\delta(k,r)
\Bigr\},
\]
then
$
\lim_{n\to\infty}\mathbb P\bigl(B_n(k,r)\bigr)=1.
$
On the event $A_k(r)\cap B_n(k,r)$, the classification of jumps relative to the threshold
$L(r,k)$ is preserved, i.e.
\[
|\Delta X_s|\le L(r,k)-\delta(k,r)
\quad\Longrightarrow\quad
|\Delta X^n_s|<L(r,k),
\]
and
\[
|\Delta X_s|\ge L(r,k)+\delta(k,r)
\quad\Longrightarrow\quad
|\Delta X^n_s|>L(r,k),
\]
for all $s\le t$.

In particular, on $A_k(r)\cap B_n(k,r)$,
\[
1_{\{|\Delta X^n_s|\le L(r,k)\}}
\le
1_{\{|\Delta X_s|\le L(r,k)\}}
\qquad \text{for all } s\le t,
\]
and therefore
\[
\sum_{s\le t} (\Delta X^n_s)^2 1_{\{|\Delta X^n_s|\le L(r,k)\}}
\le
2[X^n-X]_t
+
2\sum_{s\le t} (\Delta X_s)^2 1_{\{|\Delta X_s|\le L(r,k)\}}.
\]
\end{lemma}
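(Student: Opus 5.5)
The proof splits into three independent parts: the full-probability statement for $\bigcup_k A_k(r)$, the convergence $\mathbb P(B_n(k,r))\to1$, and the deterministic preservation of the threshold classification on $A_k(r)\cap B_n(k,r)$.

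\emph{First claim.} For a fixed càdlàg path on $[0,t]$, only finitely many jumps have $|\Delta X_s|\ge r/4$. The bands
\[
I_k:=[L(r,k)-\delta(k,r),\,L(r,k)+\delta(k,r)]=\Bigl[r\bigl(1-\tfrac{4}{2^{k+2}}\bigr),\,r\bigl(1-\tfrac{2}{2^{k+2}}\bigr)\Bigr]
=[r(1-2^{-k}),\,r(1-2^{-k-1})]
\]
are contained in $[r/2,r)$. Consecutive bands meet only at their endpoints, so each value lies in at most two bands. Hence the finitely many jump sizes $|\Delta X_s|\in[r/2,r)$ meet only finitely many $I_k$. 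Some $k$ therefore has no jump size in $I_k$, which is exactly $\omega\in A_k(r)$. This holds for every $\omega$, so $\bigcup_k A_k(r)=\Omega$ and the probability is $1$.

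\emph{Second claim.} Since $\sup_{s\le t}|\Delta(X^n-X)_s|^2\le\sum_{s\le t}(\Delta(X^n-X)_s)^2\le[X^n-X]_t$, we get
\[
\mathbb P\bigl(B_n(k,r)^c\bigr)\le\mathbb P\bigl([X^n-X]_t\ge\delta(k,r)^2\bigr)\to0 .
\]
This uses only that the quadratic variation dominates the sum of squared jumps.

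\emph{Preservation.} If $|\Delta X_s|\le L-\delta$, then on $B_n$ we have $|\Delta X^n_s|\le|\Delta X_s|+|\Delta(X^n-X)_s|<L$. Symmetrically, if $|\Delta X_s|\ge L+\delta$, then $|\Delta X^n_s|>L$. On $A_k$ every $s$ falls into one of these two cases. So $|\Delta X^n_s|\le L$ forces the first case, which gives $|\Delta X_s|\le L$ and hence the indicator inequality.

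For the final estimate, write $\Delta X^n_s=\Delta(X^n-X)_s+\Delta X_s$ and use $(a+b)^2\le2a^2+2b^2$ together with the indicator inequality. This gives
\[
\sum_{s\le t}(\Delta X^n_s)^2\,1_{\{|\Delta X^n_s|\le L\}}
\le 2\sum_{s\le t}(\Delta(X^n-X)_s)^2+2\sum_{s\le t}(\Delta X_s)^2\,1_{\{|\Delta X_s|\le L\}} .
\]
The first sum on the right is bounded by $[X^n-X]_t$.

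The only real difficulty is the first claim: checking the band geometry, namely that the bands accumulate only at $r$ and that jumps of size at least $r/2$ are finite in number. Everything else is elementary.
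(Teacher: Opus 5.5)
Your proposal is correct and follows essentially the same route as the paper. The ingredients match: finitely many jumps of size at least $r/2$ against the bands $[r(1-2^{-k}),r(1-2^{-k-1})]\subset[r/2,r)$, the bound $\sup_{s\le t}|\Delta(X^n-X)_s|\le[X^n-X]_t^{1/2}$ for $B_n(k,r)$, and the triangle inequality with $(a+b)^2\le 2a^2+2b^2$ for the preservation and final estimate. Your remark that consecutive closed bands share an endpoint, so each jump size lies in at most two of them, is a small improvement: the paper instead identifies the closed band with a half-open interval in order to claim pairwise disjointness.
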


\begin{proof}
Fix $r>0$. For each $k\ge1$, set
\[
I_k(r):=
\left[
r\left(1-\frac1{2^k}\right),
\,r\left(1-\frac1{2^{k+1}}\right)
\right).
\]
Since
\[
L(r,k)-\delta(k,r)
=
r\Bigl(1-\frac{3}{2^{k+2}}-\frac1{2^{k+2}}\Bigr)
=
r\left(1-\frac1{2^k}\right),
\]
and
\[
L(r,k)+\delta(k,r)
=
r\Bigl(1-\frac{3}{2^{k+2}}+\frac1{2^{k+2}}\Bigr)
=
r\left(1-\frac1{2^{k+1}}\right),
\]
we have
\[
[L(r,k)-\delta(k,r),\,L(r,k)+\delta(k,r)]=I_k(r).
\]
Hence
\[
A_k(r)
=
\Bigl\{
\omega:
|\Delta X_s(\omega)|\notin I_k(r)
\text{ for all } s\le t
\Bigr\}.
\]

Now suppose that
\[
\omega\in \bigcap_{k=1}^\infty
\left\{
\exists s\le t:\ |\Delta X_s(\omega)|\in I_k(r)
\right\}.
\]
Then for each $k$ we may choose $s_k\le t$ such that
$
|\Delta X_{s_k}(\omega)|\in I_k(r).
$
Since the intervals $I_k(r)$ are pairwise disjoint, the jump times $s_k$ are pairwise distinct.
Moreover,
\[
I_k(r)\subset \left[\frac r2,r\right)
\qquad \text{for all } k\ge1,
\]
so
\[
|\Delta X_{s_k}(\omega)|\ge \frac r2
\qquad \text{for all } k\ge1.
\]
Thus $\omega$ would have infinitely many jumps of size at least $r/2$ on $[0,t]$, which is
impossible for a c\`adl\`ag path. Therefore
\[
\mathbb P\Bigl(
\bigcap_{k=1}^\infty
\{\exists s\le t:\ |\Delta X_s|\in I_k(r)\}
\Bigr)=0,
\]
or equivalently,
$
\mathbb P\Bigl(\bigcup_{k\ge1}A_k(r)\Bigr)=1.
$

Now fix $k\ge1$. Since
$
\sup_{s\le t}|\Delta(X^n-X)_s|
\le [X^n-X]_t^{1/2},
$
the assumption $[X^n-X]_t\to0$ in probability implies
\[
\sup_{s\le t}|\Delta(X^n-X)_s|\to0
\qquad\text{in probability}.
\]
Hence
\[
\mathbb P\bigl(B_n(k,r)\bigr)
=
\mathbb P\Bigl(
\sup_{s\le t}|\Delta(X^n-X)_s|<\delta(k,r)
\Bigr)\to1.
\]

We now work on the event $A_k(r)\cap B_n(k,r)$. Let $s\le t$.

If $|\Delta X_s|\le L(r,k)-\delta(k,r)$, then
\[
|\Delta X^n_s|
\le
|\Delta X_s|+|\Delta(X^n-X)_s|
<
L(r,k)-\delta(k,r)+\delta(k,r)
=
L(r,k).
\]

If $|\Delta X_s|\ge L(r,k)+\delta(k,r)$, then
\[
|\Delta X^n_s|
\ge
|\Delta X_s|-|\Delta(X^n-X)_s|
>
L(r,k)+\delta(k,r)-\delta(k,r)
=
L(r,k).
\]

Since on $A_k(r)$ no jump of $X$ lies in the isolated threshold
\[
\bigl(L(r,k)-\delta(k,r),\,L(r,k)+\delta(k,r)\bigr),
\]
it follows that on $A_k(r)\cap B_n(k,r)$ the classification of jumps relative to
$L(r,k)$ is preserved.

In particular, on $A_k(r)\cap B_n(k,r)$,
\[
|\Delta X^n_s|\le L(r,k)
\quad\Longrightarrow\quad
|\Delta X_s|\le L(r,k),
\]
and hence
\[
1_{\{|\Delta X^n_s|\le L(r,k)\}}
\le
1_{\{|\Delta X_s|\le L(r,k)\}}
\qquad \text{for all } s\le t.
\]

Finally, on $A_k(r)\cap B_n(k,r)$,
\begin{align*}
\sum_{s\le t} (\Delta X^n_s)^2 1_{\{|\Delta X^n_s|\le L(r,k)\}}
&=
\sum_{s\le t} (\Delta(X^n-X)_s+\Delta X_s)^2
1_{\{|\Delta X^n_s|\le L(r,k)\}}
\\
&\le
2\sum_{s\le t} (\Delta(X^n-X)_s)^2
+
2\sum_{s\le t} (\Delta X_s)^2 1_{\{|\Delta X^n_s|\le L(r,k)\}}
\\
&\le
2[X^n-X]_t
+
2\sum_{s\le t} (\Delta X_s)^2 1_{\{|\Delta X_s|\le L(r,k)\}},
\end{align*}
which proves the result.
\end{proof}
The above result does not only allow one to control the small jumps, but also to isolate the large jumps in a way that permits a direct comparison of the corresponding atomic contributions.
\begin{corollary}[Pathwise convergence of threshold-separated jump atoms]
\label{cor:large-jump-atomic-integrands-same-fn}
Let $X$ and $\{X^n\}_{n\ge1}$ be c\`adl\`ag processes on $[0,t]$ such that
\[
[X^n-X]_t \to 0
\qquad \text{a.s.}.
\]

Let \(f_n\) and \(f\) be predictable functions on \(\Omega\times[0,t]\times\mathbb R\). Assume that there exists a set
\(\Omega_0\in\mathcal F\) with \(\mathbb P(\Omega_0)=1\) such that, for every
\(\omega\in\Omega_0\), every \(s\leq t\), and every compact set
\(K\subset\mathbb R\),
\[
        \sup_{x\in K}|f_n(s,x,\omega)-f(s,x,\omega)|\to0 .
\]

Fix \(r>0\), \(k\ge1\), \(a>0\), and let \(s\le t\). Assume moreover that
\[
        f(s,\cdot,\omega)
        \text{ is continuous at }\Delta X_s(\omega)
        \qquad\text{a.s.}
\]

Then
\[
1_{A_k(r)}
\left(
\int_{L(r,k)<|x|\le a} f_{n}(s,x)\,\mu_{n}(\{s\},dx)
-
\int_{L(r,k)<|x|\le a} f_{n}(s,x)\,\mu(\{s\},dx)
\right)
\longrightarrow 0
\]
almost surely on the event
\[
        \{(\Delta X)^*_t<a\}.
\]
\end{corollary}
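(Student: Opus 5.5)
Fix $\omega$ in the intersection of $\Omega_0$, the full-probability set on which $[X^n-X]_t(\omega)\to0$, the set on which $f(s,\cdot,\omega)$ is continuous at $\Delta X_s(\omega)$, and $A_k(r)\cap\{(\Delta X)^*_t<a\}$. We argue pathwise for this $\omega$. Since $\mu_n(\{s\},dx)=\delta_{\Delta X^n_s}(dx)1_{\{\Delta X^n_s\neq0\}}$ and similarly for $\mu$, the difference reduces to
\[
D_n:=f_n(s,\Delta X^n_s)1_{\{L(r,k)<|\Delta X^n_s|\le a\}}-f_n(s,\Delta X_s)1_{\{L(r,k)<|\Delta X_s|\le a\}} .
\]
The plan is to show that, for all $n$ large, both indicators take the same value. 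Then $D_n$ is either identically $0$, or equals $f_n(s,\Delta X^n_s)-f_n(s,\Delta X_s)$ with both arguments confined to a compact set.

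\textbf{Step 1 (indicators agree).} By the inequality $\sup_{u\le t}|\Delta(X^n-X)_u|\le[X^n-X]_t^{1/2}$ from the proof of Lemma~\ref{lem:thres}, we get $\Delta X^n_s\to\Delta X_s$. Hence $\omega\in B_n(k,r)$ for $n$ large. On $A_k(r)\cap B_n(k,r)$, Lemma~\ref{lem:thres} shows that the classification relative to $L(r,k)$ is preserved: $|\Delta X^n_s|>L(r,k)$ if and only if $|\Delta X_s|>L(r,k)$. Note that $|\Delta X_s|$ cannot lie in the gap $I_k(r)$.

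For the upper cutoff $a$, we use $|\Delta X_s|<a$, which holds on $\{(\Delta X)^*_t<a\}$. Convergence of $\Delta X^n_s$ then gives $|\Delta X^n_s|<a$ eventually. Therefore $1_{\{L<|\Delta X^n_s|\le a\}}=1_{\{L<|\Delta X_s|\le a\}}$ for $n\ge n_0(\omega)$. If the common value is $0$, then $D_n=0$ and we are done.

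\textbf{Step 2 (continuity argument).} In the remaining case, take $K=[-a,a]$, which contains $\Delta X_s$ and $\Delta X^n_s$ for $n\ge n_0$. Split
\[
|D_n|\le 2\sup_{x\in K}|f_n(s,x)-f(s,x)|+|f(s,\Delta X^n_s)-f(s,\Delta X_s)| .
\]
The first term tends to $0$ because $\omega\in\Omega_0$. The second tends to $0$ by continuity of $f(s,\cdot,\omega)$ at $\Delta X_s(\omega)$, combined with $\Delta X^n_s\to\Delta X_s$.

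\textbf{Main obstacle.} The delicate point is Step 1 near the boundaries of the indicator. The lower threshold is exactly what the Threshold Isolation Principle handles. Without it, a jump $\Delta X_s$ sitting at $L(r,k)$ could make the indicators disagree along the whole sequence, and $D_n$ would not vanish. The upper cutoff is handled by the strict inequality $(\Delta X)^*_t<a$; this is why the event is stated with strict inequality.

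A minor technical issue is measurability and the choice of null set. The continuity hypothesis is stated for fixed $s$, which suffices because $s$ is fixed here. We only need to intersect countably many full-measure sets, which is why the convergence is almost sure rather than holding for every $\omega$.
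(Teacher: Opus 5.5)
Your proof is correct and follows essentially the same route as the paper. You use eventual membership in $B_n(k,r)$, preservation of the threshold classification on $A_k(r)\cap B_n(k,r)$, and the strict cutoff $|\Delta X_s|<a$ to make the indicators agree; you then apply the same three-term splitting via local uniform convergence on a compact set and continuity of $f(s,\cdot,\omega)$ at $\Delta X_s(\omega)$. Your bookkeeping is, if anything, slightly more careful than the paper's: you include the factor $1_{\{\Delta X^n_s\neq0\}}$ in $\mu_n(\{s\},dx)$, and you invoke the continuity hypothesis only at the fixed $s$.
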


\begin{proof}
Let \(\Omega_1\) be a full-measure event such that, for every
\(\omega\in\Omega_1\),
\[
        [X^n-X]_t(\omega)\to0,
\]
and such that, for every \(s\leq t\),
\[
        f(s,\cdot,\omega)\text{ is continuous at }\Delta X_s(\omega).
\]
We work on the full-measure event \(\Omega_0\cap\Omega_1\), where \(\Omega_0\)
is the event from the assumption.

Fix
\[
        \omega \in A_k(r)\cap \Omega_0\cap\Omega_1 \cap \{(\Delta X)^*_t<a\}.
\]

Since
\[
        \sup_{u\le t}|\Delta(X^{n}-X)_u|
        \le [X^{n}-X]_t^{1/2},
\]
and
$
        [X^{n}-X]_t(\omega)\to0,
$
there exists \(N_0(\omega)\in\mathbb N\) such that, for all
\(n\ge N_0(\omega)\),
\[
        \sup_{u\le t}
        |\Delta(X^{n}-X)_u|(\omega)
        <\delta(k,r).
\]
Hence
$
        \omega\in B_{n}(k,r)
$
for all sufficiently large \(n\). Therefore, for all sufficiently large
\(n\),
$
        \omega\in
        A_k(r)\cap B_{n}(k,r).
$

Since \(\omega\in A_k(r)\), either
$
        |\Delta X_s(\omega)|
        \le
        L(r,k)-\delta(k,r),
$
or
$
        |\Delta X_s(\omega)|
        \ge
        L(r,k)+\delta(k,r).
$

\medskip

\noindent

\textit{Case 1:}
\[
        |\Delta X_s(\omega)|
        \le
        L(r,k)-\delta(k,r).
\]

Since
$
|\Delta X^{n}_s(\omega)-\Delta X_s(\omega)|
<
\delta(k,r)
$
for all sufficiently large \(n\), we obtain
$
|\Delta X^{n}_s(\omega)|
<
L(r,k)
$
for all sufficiently large \(n\).

Thus neither \(\mu(\omega,\{s\},dx)\) nor
\(\mu_{n}(\omega,\{s\},dx)\) charges
$
        \{x:L(r,k)<|x|\le a\}.
$
Hence
\[
\int_{L(r,k)<|x|\le a}
f_{n}(s,x,\omega)\,
\mu_{n}(\omega,\{s\},dx)
-
\int_{L(r,k)<|x|\le a}
f_{n}(s,x,\omega)\,
\mu(\omega,\{s\},dx)
=0
\]
for all sufficiently large \(n\).
\medskip

\noindent
\textit{Case 2:}
\[
        |\Delta X_s(\omega)|
        \ge
        L(r,k)+\delta(k,r).
\]

Since
\[
|\Delta(X^{n}-X)_s(\omega)|
\le
[X^{n}-X]_t(\omega)^{1/2}
\to0,
\]
we have
$
\Delta X^{n}_s(\omega)\to\Delta X_s(\omega)
$.
Together with
$
|\Delta X_s(\omega)|\ge L(r,k)+\delta(k,r),
$
this implies
$
|\Delta X^{n}_s(\omega)|>L(r,k)
$
for all sufficiently large \(n\).

Since
$
        (\Delta X)^*_t(\omega)<a,
$
we have
$
        |\Delta X_s(\omega)|<a.
$
Together with
$
        \Delta X^{n}_s(\omega)\to\Delta X_s(\omega),
$
this implies that, for all sufficiently large \(n\),
$
        |\Delta X^{n}_s(\omega)|<a.
$
Therefore, for all sufficiently large \(n\),
\[
        1_{\{L(r,k)<|\Delta X^{n}_s(\omega)|\le a\}}
        =
        1_{\{L(r,k)<|\Delta X_s(\omega)|\le a\}}
        =1.
\]

Since
$
        \mu(\omega,\{s\},dx)
        =
        \delta_{\Delta X_s(\omega)}(dx),
$
and
$
        \mu_{n}(\omega,\{s\},dx)
        =
        \delta_{\Delta X^{n}_s(\omega)}(dx),
$
we get, for all sufficiently large \(n\),
\[
\begin{aligned}
&
\int_{L(r,k)<|x|\le a}
f_{n}(s,x,\omega)\,
\mu_{n}(\omega,\{s\},dx)
\\
&\qquad -
\int_{L(r,k)<|x|\le a}
f_{n}(s,x,\omega)\,
\mu(\omega,\{s\},dx)
\\
&=
f_{n}(s,\Delta X^{n}_s(\omega),\omega)
-
f_{n}(s,\Delta X_s(\omega),\omega).
\end{aligned}
\]

We show that the last expression converges to zero. Write
\[
x_n:=\Delta X^{n}_s(\omega),
\qquad
x:=\Delta X_s(\omega).
\]
Then \(x_n\to x\). Hence there exists a compact set \(K\subset\mathbb R\)
such that
\[
        x_n\in K
        \quad\text{for all sufficiently large }n,
        \qquad
        x\in K.
\]
For all sufficiently large \(n\),
\[
\begin{aligned}
&
|f_{n}(s,x_n,\omega)-f_{n}(s,x,\omega)|
\\
&\le
|f_{n}(s,x_n,\omega)-f(s,x_n,\omega)|
+
|f(s,x_n,\omega)-f(s,x,\omega)|
\\
&\qquad+
|f(s,x,\omega)-f_{n}(s,x,\omega)|
\\
&\le
2\sup_{y\in K}
|f_{n}(s,y,\omega)-f(s,y,\omega)|
+
|f(s,x_n,\omega)-f(s,x,\omega)|.
\end{aligned}
\]
The first term tends to zero by local uniform convergence of
\(f_{n}(s,\cdot,\omega)\) to \(f(s,\cdot,\omega)\) on \(K\). The second
term tends to zero because \(f(s,\cdot,\omega)\) is continuous at
\(x=\Delta X_s(\omega)\). Therefore
\[
        f_{n}(s,\Delta X^{n}_s(\omega),\omega)
        -
        f_{n}(s,\Delta X_s(\omega),\omega)
        \to0.
\]

Consequently,
\[
\int_{L(r,k)<|x|\le a}
f_{n}(s,x,\omega)\,
\mu_{n}(\omega,\{s\},dx)
-
\int_{L(r,k)<|x|\le a}
f_{n}(s,x,\omega)\,
\mu(\omega,\{s\},dx)
\to0.
\]

Thus the asserted convergence holds for every
\[
        \omega \in A_k(r)\cap \Omega_0\cap\Omega_1 \cap \{(\Delta X)^*_t<a\}.
\]
Outside \(A_k(r)\), the prefactor \(1_{A_k(r)}\) makes the assertion
trivial. Since
$
        \mathbb P(\Omega_0\cap\Omega_1)=1,
$
the result follows.
\end{proof}

The proof proceeds by first establishing a local alignment of the
threshold-separated large predictable jump times. This eventually makes
it possible to reduce the problem to a counting argument, showing that
only finitely many distinct large jump atoms can contribute
before the localization stops, which in turn yields the compensator
mass control.
\begin{lemma}[mass control of compensator measures]\label{complemma} 
Let $X$ and $\{X^n\}_{n\ge1}$ be c\`adl\`ag processes on $[0,t]$ with jump measures
$\mu,\mu_n$ and compensators $\nu,\nu_n$, respectively, and assume that
\[
[X^n-X]_t \to 0 \quad \text{a.s.}
\]
and let
\[
\mathcal{A}:=\{(s,\omega): s\le t,\ \nu(\omega,\{s\},\mathbb R)>0\},
\qquad
\mathcal{A}_n:=\{(s,\omega): s\le t,\ \nu_n(\omega,\{s\},\mathbb R)>0\}.
\]
For every \(r>0\), there exists a family of stopping times $\{\theta_{q,k,\ell,n}\}_{q,k,\ell,n\in\N}$,  depending on \(r\), such that
\[
\{s\in \mathcal{A}_n\cap [0,\theta_{q,k,\ell,n}): |\Delta X^n_{s}|>L(r,k)\}
=
\{s\in \mathcal{A}\cap [0,\theta_{q,k,\ell,n}): |\Delta X_{s}|>L(r,k)\}
\quad \text{a.s.}
\]

For every $\varepsilon>0$ there exists $k_0$ such that for all $k\ge k_0$
there exists $\ell_0=\ell_0(k,\varepsilon)$ such that for all $\ell\ge \ell_0$
there exists $q_0=q_0(\ell,\varepsilon)$ such that for all $q\ge q_0$,
\[
\sup_{n\ge \ell}\P(\theta_{q,k,\ell,n}\le t)<\varepsilon.
\]

Moreover,
\[
\sum_{s\le t}
\sup_{n\ge\ell}\bigl(1_{\{\cdot<\theta_{q,k,\ell,n}\}}\bigr)_{s-}
\left(
\int_{|x|>L(r,k)}\nu_n(\{s\},dx)
+
\int_{|x|>L(r,k)}\nu(\{s\},dx)
\right)
<\infty
\qquad \textsf{a.s.}
\]
\end{lemma}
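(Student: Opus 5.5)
The plan is to build \(\theta_{q,k,\ell,n}\) as the minimum of three stopping times. The first encodes failure of the threshold isolation from Lemma~\ref{lem:thres}. The second encodes failure of alignment between the predictable atom sets of \(X^n\) and \(X\) above level \(L(r,k)\). The third counts large jumps and large compensator atoms, killing the process once too many have occurred.

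\textbf{Step 1: isolation and the alignment identity.} Fix \(r\) and \(k\). Let \(\tau^{A}_{k}\) be the first time a jump of \(X\) falls in \(I_k(r)\), and let \(\tau^{B}_{n,k}\) be the first time \(|\Delta(X^n-X)_s|\ge\delta(k,r)\). Both are stopping times (hitting times of optional random sets by jumps), and before their minimum the conclusions of Lemma~\ref{lem:thres} hold pathwise. Hence \(|\Delta X^n_s|>L(r,k)\iff|\Delta X_s|>L(r,k)\) there, and the two sets of large jump times coincide.

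To pass to \(\mathcal A_n\) versus \(\mathcal A\), I would use the exhausting predictable times \((S_j)\) of \(\mathcal A\) and \((S^n_j)\) of \(\mathcal A_n\) from Proposition~1.17(b). A large jump at \(s\) with \(s\in\mathcal A\) is not totally inaccessible for \(X\). I would then argue that it is also not totally inaccessible for \(X^n\) on the event that it is large for \(X^n\). If it were, the jump of \(X^n-X\) at \(s\) would have size comparable to \(L(r,k)\), which exceeds \(\delta(k,r)\) and contradicts \(s<\tau^B_{n,k}\).

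Where this pathwise argument does not close, I would simply add a further stopping time: the first large jump time at which \(1_{\mathcal A}\ne 1_{\mathcal A_n}\). This is the debut of an optional set, so it is a stopping time, and it yields the displayed set identity trivially up to \(\theta\). Its probability of occurring before \(t\) must then be shown small. I would aim to show that on \(\{\tau^A_k\wedge\tau^B_{n,k}>t\}\) this event has probability tending to zero, uniformly in \(n\ge\ell\) for \(\ell\) large. This is where the a.s.\ convergence \(\Delta X^n_s\to\Delta X_s\) at large jump times enters.

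\textbf{Step 2: the probability bound.} Choose \(k_0\) so that \(\mathbb P(A_k(r)^c)<\varepsilon/3\) for \(k\ge k_0\), using \(\mathbb P(\bigcup_k A_k(r))=1\) and \(A_k\) essentially increasing (pass to a tail union if needed). Next choose \(\ell_0\) so that \(\mathbb P(\bigcup_{n\ge\ell}B_n(k,r)^c)<\varepsilon/3\). This is possible because under a.s.\ convergence \(\sup_{n\ge\ell}\sup_s|\Delta(X^n-X)_s|\to0\) a.s. It is here that uniformity over \(n\ge\ell\) comes from, and it is why the order of quantifiers is \(k\to\ell\to q\).

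Finally, the index \(q\) is a counting level. Let \(\rho_{q,\ell}\) be the first time either of the following exceeds \(q\):
\[
\#\{s:|\Delta X_s|>L(r,k)\}\quad\text{or}\quad \sum_{s}\sup_{n\ge\ell}\Bigl(\nu_n(\{s\},\{|x|>L(r,k)\})+\nu(\{s\},\{|x|>L(r,k)\})\Bigr).
\]
On the aligned event the atoms of \(\nu_n\) sit only at the finitely many predictable times \(S_j\) where \(X\) can jump by more than \(L(r,k)\). Each atom has mass at most \(1\) by Proposition~1.17(b), so this sum is finite a.s. and \(\mathbb P(\rho_{q,\ell}\le t)\to0\) as \(q\to\infty\). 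Setting \(\theta_{q,k,\ell,n}:=\tau^A_k\wedge\tau^B_{n,k}\wedge\rho_{q,\ell}\) gives \(\sup_{n\ge\ell}\mathbb P(\theta\le t)<\varepsilon\). The summability claim then follows since, before \(\theta\) (using left limits so the indicator is predictable), the sum is at most \(q+1\).

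\textbf{Main obstacle.} The delicate step is showing that the compensator mass sum in \(\rho_{q,\ell}\) is finite a.s. uniformly in \(n\ge\ell\). The atoms \(\nu_n(\{S_j\},\cdot)\) are conditional probabilities \(\mathbb P(\Delta X^n_{S_j}\in\cdot\mid\mathcal F_{S_j-})\), and they can put small positive mass on large jumps at infinitely many predictable times even when realized jumps are few. I would first try the identity
\[
\sum_j \nu_n(\{S_j\},\{|x|>L\})=\text{compensator of }\#\{\text{large jumps at }S_j\}.
\]
Combined with localization at the counting time, this gives the expectation bound \(\mathbb E[\cdot]\le q+1\). I would then take a supremum over \(n\) via a dominating predictable process built from \(\sup_{n\ge\ell}\) of the counts. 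That process is bounded by the count for \(X\) plus one on the aligned event, so summability is transferred from \(X\) to the whole sequence.
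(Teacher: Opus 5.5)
Your architecture (an isolation time, a counting time, and summability built into the stopping time) is viable, but neither step that carries the content is supplied. \emph{First, the alignment.} Your pathwise argument is a non sequitur. If $X^n$ and $X$ both jump at $s$ by nearly the same large amount, then $|\Delta(X^n-X)_s|$ is small whatever the accessibility of $s$ for either process. So nothing contradicts $s<\tau^B_{n,k}$: accessibility is simply not visible in jump sizes. For the same reason, your fallback stopping time (the first large jump with $1_{\mathcal A}\neq 1_{\mathcal A_n}$) only relocates the problem. The a.s.\ convergence $\Delta X^n_s\to\Delta X_s$ cannot show that this time is not reached. The missing idea, which is exactly the paper's Claim, is that a process cannot jump at a predictable time at which its compensator has no atom. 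For predictable $U$, apply $\mathbb E[W*\mu_n]=\mathbb E[W*\nu_n]$ to the predictable $W=1_{[[U]]}1_{\{\nu_n(\{\cdot\},\mathbb R)=0\}}1_{\{x\neq 0\}}$. This gives $\Delta X^n_U=0$ a.s.\ on $\{\nu_n(\{U\},\mathbb R)=0\}$, and symmetrically for $X$. Run this over countable predictable exhaustions of $\mathcal A$ and $\mathcal A_n$ and combine it with threshold preservation. The set identity on $[0,\tau^A_k\wedge\tau^B_{n,k})$ then holds off a null set, and no extra stopping time is needed.

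\emph{Second, the mass control.} Since $\rho_{q,\ell}$ is the first passage of $V_u:=\sum_{s\le u}\sup_{n\ge\ell}(\nu_n+\nu)(\{s\},\{|x|>L(r,k)\})$ above $q$, summability before $\theta$ is automatic. The whole third assertion is thereby moved into $\mathbb P(\rho_{q,\ell}\le t)\to0$, i.e.\ $V_t<\infty$ a.s. Your Step 2 reason for this is false, as you note yourself: alignment concerns realized jumps, while $\nu_n(\{s\},\{|x|>L(r,k)\})$ can be positive at infinitely many predictable times where nothing happens. The repair in your last paragraph has two defects. It is circular if the localization used is $\theta$, which contains $\rho_{q,\ell}$. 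And $\sup_n$ of the counts does not dominate $\sup_n$ of the masses, since one count jumping at $s$ need not make the supremum jump. The right device is the union event. At a predictable time $S$, $\nu_n(\{S\},\{|x|>L\})=\mathbb P(|\Delta X^n_S|>L\mid\mathcal F_{S-})\le\mathbb P(D_S\mid\mathcal F_{S-})$, where $D_S=\bigcup_{n\ge\ell}\{|\Delta X^n_S|>L\}$. Moreover, $\sum_S\mathbb P(D_S\mid\mathcal F_{S-})$ is the compensator of the process counting the times $s\in\bigcup_{n\ge\ell}\mathcal A_n$ at which $D_s$ occurs. That counting process is a.s.\ finite on $[0,t]$. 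Indeed, for $n\ge\ell_1(\omega)$ one has $\sup_s|\Delta(X^n-X)_s|<L/2$, so the large jumps of such $X^n$ lie among the finitely many jumps of $X$ exceeding $L/2$. Hence the counting process is locally bounded, and its compensator is a.s.\ finite. Together with the single-process bound for $\nu$, this gives $V_t<\infty$ a.s.\ even without localization. With both fixes your construction is complete and shorter than the paper's route. The paper localizes by the realized counts $\tau^X_q,\tau_{q,n}$ and the common isolation time $\sigma_k\wedge\inf_{n\ge\ell}\eta_{n,k}$, then uses a disjoint predictable exhaustion. Note that the paper's passage from that exhaustion to $\sum_{n\ge\ell}\sum_{s\le t}$ overcounts: for $X^n=X$ with a deterministic jump of size $>r$, this double sum is infinite. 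So a union bound of the above kind is needed there too.
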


\begin{proof}
We first show that large predictable jump times coincide on the threshold-separated event.
%[Exact jump matching of large purely predictable jumps in probability]
\begin{claim}
\label{claim:exact-jump-matching-prob}
For each $n\ge1$, let $\{U_m\}_{m\ge1}$ and $\{U_m^n\}_{m\ge1}$ be exhausting sequences of predictable
stopping times for $\mathcal{A}$ and $\mathcal{A}_n$, respectively.

Fix $r>0$ and $k\ge1$, and let $A_k(r)$ and $B_n(k,r)$ be as in
Lemma~\ref{lem:thres}. Then for each $n\in\N$ there exists a set $\Omega_{n}$ with
\[
\mathbb P(\Omega_{n})=1
\]
such that for every
\[
\omega\in (A_k(r)\cap B_n(k,r))\cap \Omega_{n},
\]
one has
\[
\{U_m^n(\omega): |\Delta X^n_{U_m^n}(\omega)|>L(r,k)\}
=
\{U_m(\omega): |\Delta X_{U_m}(\omega)|>L(r,k)\}.
\]
\end{claim}

\begin{rema}
By an exhausting sequence we mean that for almost every $\omega$,
\[
\{s\le t:\nu_n(\omega,\{s\},\mathbb R)>0\}
=
\{U_m^n(\omega):m\ge1,\ U_m^n(\omega)\le t\},
\]
and similarly for $\{U_m\}$ and $\mathcal{A}$.
\end{rema}

\begin{proof}[Proof of the claim]
Fix $r>0$ and $k\ge1$. For each $m\ge1$, since $U_m$ is predictable, the map
\[
(s,x)\mapsto 1_{\{s=U_m\}}\,1_{\{\nu_n(\{s\},\mathbb R)=0\}}\,|x|
\]
is predictable. Hence the compensator identity for $X^n$ yields
\begin{align*}
0
=
\int_{\mathbb R}
|x|\,1_{\{\nu_n(\{U_m\},\mathbb R)=0\}}\,\nu_n(\{U_m\},dx)
&=
\E\!\left[
\int_{\mathbb R}
|x|\,1_{\{\nu_n(\{U_m\},\mathbb R)=0\}}\,\mu_n(\{U_m\},dx)
\Bigm| \mathcal F_{U_m-}
\right]
\\
&=
\E\!\left[
1_{\{\nu_n(\{U_m\},\mathbb R)=0\}}\,|\Delta X^n_{U_m}|
\Bigm| \mathcal F_{U_m-}
\right].
\end{align*}
Since the conditional expectation is nonnegative, it follows that
\[
1_{\{\nu_n(\{U_m\},\mathbb R)=0\}}\,|\Delta X^n_{U_m}|=0
\qquad \text{a.s.}
\]
Let $\Omega^{(1)}_{m,n}$ be a full-measure event on which the implication
\[
|\Delta X^n_{U_m}|>0
\quad\Longrightarrow\quad
\nu_n(\{U_m\},\mathbb R)>0
\]
holds.

Similarly, for each $m\ge1$, since $U_m^n$ is predictable, the map
\[
(s,x)\mapsto 1_{\{s=U_m^n\}}\,1_{\{\nu(\{s\},\mathbb R)=0\}}\,|x|
\]
is predictable. Hence the compensator identity for $X$ yields
\begin{align*}
0
=
\int_{\mathbb R}
|x|\,1_{\{\nu(\{U_m^n\},\mathbb R)=0\}}\,\nu(\{U_m^n\},dx)
&=
\E\!\left[
\int_{\mathbb R}
|x|\,1_{\{\nu(\{U_m^n\},\mathbb R)=0\}}\,\mu(\{U_m^n\},dx)
\Bigm| \mathcal F_{U_m^n-}
\right]
\\
&=
\E\!\left[
1_{\{\nu(\{U_m^n\},\mathbb R)=0\}}\,|\Delta X_{U_m^n}|
\Bigm| \mathcal F_{U_m^n-}
\right].
\end{align*}
Hence
\[
1_{\{\nu(\{U_m^n\},\mathbb R)=0\}}\,|\Delta X_{U_m^n}|=0
\qquad \text{a.s.}
\]
Let $\Omega^{(2)}_{m,n}$ be a full-measure event on which the implication
\[
|\Delta X_{U_m^n}|>0
\quad\Longrightarrow\quad
\nu(\{U_m^n\},\mathbb R)>0
\]
holds.

Now define
\[
\Omega_{n}
:=
\bigcap_{m=1}^\infty \Omega^{(1)}_{m,n}
\cap
\bigcap_{m=1}^\infty \Omega^{(2)}_{m,n}.
\]
Since this is a countable intersection of full-measure events,
$
\mathbb P(\Omega_{n})=1.
$
We now prove that for every
\[
\omega\in (A_k(r)\cap B_n(k,r))\cap \Omega_{n},
\]
one has
\[
\{U_m(\omega): |\Delta X_{U_m}(\omega)|>L(r,k)\}
\subseteq
\{U_m^n(\omega): |\Delta X^n_{U_m^n}(\omega)|>L(r,k)\}.
\]

Fix such an $\omega$, and let $m\ge1$ satisfy
$
|\Delta X_{U_m}(\omega)|>L(r,k).
$
By Lemma~\ref{lem:thres}, the classification relative to $L(r,k)$ is preserved
on $A_k(r)\cap B_n(k,r)$, hence
$
|\Delta X^n_{U_m}(\omega)|>L(r,k).
$
In particular,
$
|\Delta X^n_{U_m}(\omega)|>0.
$
Since $\omega\in\Omega_{n}\subseteq \Omega^{(1)}_{m,n}$, it follows that
$
\nu_n(\omega,\{U_m(\omega)\},\mathbb R)>0.
$
Because $\{U_j^n\}_{j\ge1}$ exhausts the predictable jump times of $\mathcal{A}_n$, there exists
$j=j(\omega,m,n)$ such that
$
U_m(\omega)=U_j^n(\omega).
$
Moreover,
\[
|\Delta X^n_{U_j^n}(\omega)|
=
|\Delta X^n_{U_m}(\omega)|
>
L(r,k).
\]
Thus
\[
U_m(\omega)\in \{U_j^n(\omega): |\Delta X^n_{U_j^n}(\omega)|>L(r,k)\}.
\]
Since $m$ was arbitrary, the first inclusion follows.

We next prove that for every
\[
\omega\in (A_k(r)\cap B_n(k,r))\cap \Omega_{n},
\]
one has
\[
\{U_m^n(\omega): |\Delta X^n_{U_m^n}(\omega)|>L(r,k)\}
\subseteq
\{U_m(\omega): |\Delta X_{U_m}(\omega)|>L(r,k)\}.
\]

Fix such an $\omega$, and let $m\ge1$ satisfy
$
|\Delta X^n_{U_m^n}(\omega)|>L(r,k).
$
Again by Lemma~\ref{lem:thres},
$
|\Delta X_{U_m^n}(\omega)|>L(r,k),
$
hence in particular
$
|\Delta X_{U_m^n}(\omega)|>0.
$
Since $\omega\in\Omega_{n}\subseteq \Omega^{(2)}_{m,n}$, it follows that
$
\nu(\omega,\{U_m^n(\omega)\},\mathbb R)>0.
$
Because $\{U_j\}_{j\ge1}$ exhausts the predictable jump times of $\mathcal{A}$, there exists
$j=j(\omega,m,n)$ such that
$
U_m^n(\omega)=U_j(\omega).
$
Moreover,
\[
|\Delta X_{U_j}(\omega)|
=
|\Delta X_{U_m^n}(\omega)|
>
L(r,k).
\]
Thus
\[
U_m^n(\omega)\in \{U_j(\omega): |\Delta X_{U_j}(\omega)|>L(r,k)\}.
\]
Since $m$ was arbitrary, the reverse inclusion follows.

Combining the two inclusions, we conclude that for every
\[
\omega\in (A_k(r)\cap B_n(k,r))\cap \Omega_{n},
\]
\[
\{U_m^n(\omega): |\Delta X^n_{U_m^n}(\omega)|>L(r,k)\}
=
\{U_m(\omega): |\Delta X_{U_m}(\omega)|>L(r,k)\}.
\]
This proves the claim.
\end{proof}
Fix \(r>0\), \(k\ge1\), and \(q,\ell\ge1\). Define
\[
\begin{aligned}
&N_u
:=
\sum_{s\le u}1_{s\in\mathcal A}1_{|\Delta X_s|>L(r,k)},
\qquad
N_{u,n}
:=
\sum_{s\le u}1_{s\in\mathcal A_n}1_{|\Delta X^n_s|>L(r,k)},\\[0.5em]
&\tau_q^X
:=
\inf\{u\ge0:N_u\ge q\},
\qquad
\tau_{q,n}
:=
\inf\{u\ge0:N_{u,n}\ge q\}\wedge \tau_q^X,\\[0.5em]
&\sigma_k
:=
\inf\Bigl\{
s\ge0:
0<\bigl||\Delta X_s|-L(r,k)\bigr|<\delta(k,r)
\Bigr\},\\
&\eta_{n,k}
:=
\inf\Bigl\{
s\ge0:
(\Delta(X^n-X))^*_s\ge\delta(k,r)
\Bigr\},\\[0.5em]
&\eta_k^{(\ell)}
:=
\inf_{n\ge \ell}\eta_{n,k},
\qquad
\zeta_{\ell,k}
:=
\eta_k^{(\ell)}\wedge \sigma_k,
\qquad
\theta_{q,k,\ell,n}
:=
\zeta_{\ell,k}\wedge \tau_{q,n}.
\end{aligned}
\]
Since $N$ and $N_{\cdot,n}$ are adapted increasing càdlàg counting
processes, the hitting times
\[
\tau^X_q:=\inf\{u\ge0:N_u\ge q\},
        \qquad
\inf\{u\ge0:N_{u,n}\ge q\}
\]
are stopping times. Indeed, for every $u\ge0$,
\[
\{\tau^X_q\le u\}=\{N_u\ge q\}\in\mathcal F_u,
\]
and similarly for $N_{\cdot,n}$. Hence
\[
\tau_{q,n}:=\inf\{u\ge0:N_{u,n}\ge q\}\wedge \tau^X_q
\]
is a stopping time.
With the above definitions we get
\[
\P(\tau_{q,n}\le t)
\le
\P(N_t\ge q)+\P(N_{t,n}\ge q)
\le
\P([X]_t\ge qL(r,k)^2)
+
\P([X^n]_t\ge qL(r,k)^2).
\]
The family \(\{[X^n]_t\}_{n\in\mathbb N}\) is bounded in probability.
Indeed, since
\[
[X^n]_t\le 2[X]_t+2[X^n-X]_t,
\]
and \([X^n-X]_t\to0\) in probability, the family
\(\{[X^n-X]_t\}_{n\in\mathbb N}\) is bounded in probability. Hence, for
every \(\varepsilon>0\), one can choose \(K>0\) such that
$
\sup_n \mathbb P([X^n]_t>K)<\varepsilon
$. It follows that
\[
\lim_{q\to\infty}\sup_n
\mathbb P([X^n]_t\ge qL(r,k)^2)=0.
\]
Consequently,
$
\lim_{q\to\infty}\sup_{n}\P(\tau_{q,n}\le t)=0$. 

Moreover,
\[
\{\theta_{q,k,\ell,n}\le t\}
\subseteq
\{\sigma_k\le t\}
\cup
\{\eta_k^{(\ell)}\le t\}
\cup
\{\tau_{q,n}\le t\}.
\]
By Lemma~\ref{lem:thres},
\[
        \mathbb P(\sigma_k\le t)\to0
        \qquad\text{as } k\to\infty .
\]
For fixed \(k\), since \([X^n-X]_t\to0\) a.s.,
\[
        \eta_k^{(\ell)}\uparrow\infty
        \qquad\text{a.s. as } \ell\to\infty,
\]
and hence
\[
        \mathbb P(\eta_k^{(\ell)}\le t)\to0
        \qquad\text{as } \ell\to\infty .
\]
Combining this with
\[
        \lim_{q\to\infty}\sup_n
        \mathbb P(\tau_{q,n}\le t)=0,
\]
we obtain the asserted localization property: for every \(\varepsilon>0\)
there exists \(k_0\) such that for all \(k\ge k_0\) there exists
\(\ell_0=\ell_0(k,\varepsilon)\) such that for all \(\ell\ge\ell_0\)
there exists \(q_0=q_0(\ell,\varepsilon)\) such that for all \(q\ge q_0\),
\[
        \sup_{n\ge\ell}
        \mathbb P(\theta_{q,k,\ell,n}\le t)
        <\varepsilon .
\]
We shall use two standard facts from the theory of predictable sets.
First, every thin predictable set admits an exhaustion by predictable
stopping times. Second, if \(B\) and \(C\) are predictable sets and
\(B\) is thin, then \(B\setminus C\) is again a thin predictable set.

We shall also use the following standard fact. If \(U\) is a predictable
stopping time and \(\tau\) is a stopping time, then
\(\{U\le \tau\}\in\mathcal F_{U-}\) (see e.g. \cite{JacodShiryaev}).
Hence the restricted time
\[
U1_{\{U\le \tau\}}+\infty 1_{\{U>\tau\}}
\]
is predictable. Consequently, restricting a predictable graph to
\(\{s\le\tau\}\) preserves predictability.
For fixed \(\ell\), let \(\{T_{\ell,\ell,m}\}_{m\in\mathbb N}\) be a
predictable exhaustion of the thin predictable set
\[
\mathcal{A}_\ell\cap
\left\{
s:
\int_{|x|>L(r,k)}\nu_\ell(\{s\},dx)>0
\right\}
\cap \{s\le \tau_{q,\ell}\}.
\]
Inductively, having chosen the families
\(\{T_{\ell,j,m}\}_{m\in\mathbb N}\), \(\ell\le j\le n\), let
\(\{T_{\ell,n+1,m}\}_{m\in\mathbb N}\) be a predictable exhaustion of
the thin predictable set
\[
\mathcal{A}_{n+1}\cap
\left\{
s:
\int_{|x|>L(r,k)}\nu_{n+1}(\{s\},dx)>0
\right\}
\cap \{s\le \tau_{q,n+1}\}
\setminus
\bigcup_{j=\ell}^{n}\bigcup_{m\ge1} [[T_{\ell,j,m}]] .
\]
Since the predictable \(\sigma\)-field is stable under countable unions
and set differences, each recursive remainder is again a thin
predictable set. Hence each such
remainder admits a predictable exhaustion, and the graphs
\([[T_{\ell,n,m}]]\), \(n\ge\ell\), \(m\ge1\), may be chosen pairwise
disjoint, up to evanescence.
%For fixed \(\ell\), let \(\{T_{\ell,\ell,m}\}_{m\in\N}\) exhaust the jump times in
%\[
%\mathcal A_\ell
%\cap
%\left\{
%s:
%\int_{|x|>L(r,k)}\nu_\ell(\{s\},dx)>0
%\right\}
%\cap
%\{s\le \tau_{q,\ell}\}.
%\]
%
%Inductively, for \(n\ge \ell\), let
%\(\{T_{\ell,n+1,m}\}_{m\in\N}\) exhaust the jump times in
%\[
%\mathcal A_{n+1}
%\cap
%\left\{
%s:
%\int_{|x|>L(r,k)}\nu_{n+1}(\{s\},dx)>0
%\right\}
%\cap
%\{s\le \tau_{q,n+1}\}
%\setminus
%\Bigl(
%\bigcup_{j=\ell}^n
%\{T_{\ell,j,m}\}_{m\in\N}
%\Bigr).
%\]
%The exhaustion is chosen without repetitions, and the recursive removal
%of the previously selected graphs ensures that the graphs
%\[
%[[T_{\ell,n,m}]],\qquad n\ge \ell,\ m\ge1,
%\]
%are pairwise disjoint, up to evanescence.

%
%Hence
%\[
%\{T_{\ell,n_1,m}\}_{m\in\N}
%\cap
%\{T_{\ell,n_2,m}\}_{m\in\N}
%=
%\emptyset
%\]
%whenever \(n_1\neq n_2\) and \(n_1,n_2\ge \ell\).

By construction, no jump magnitude \(|\Delta X_s|\) before
\(\zeta_{\ell,k}\) lies within distance \(\delta(k,r)\) of the threshold
\(L(r,k)\), while for all \(n\ge \ell\),
\[
\sup_{s<\zeta_{\ell,k}} |\Delta(X^n-X)_s|\le \delta(k,r).
\]
Therefore, by the threshold-separated alignment claim, outside a null
set, for every \(n\ge\ell\),
\[
\{s\in\mathcal A_n\cap[0,\zeta_{\ell,k}):|\Delta X^n_s|>L(r,k)\}
=
\{s\in\mathcal A\cap[0,\zeta_{\ell,k}):|\Delta X_s|>L(r,k)\}.
\]
Restricting both sides to \([0,\theta_{q,k,\ell,n})\), and using
\(\theta_{q,k,\ell,n}\le\zeta_{\ell,k}\), gives
\[
\{s\in \mathcal A_n\cap[0,\theta_{q,k,\ell,n}):|\Delta X^n_s|>L(r,k)\}
=
\{s\in \mathcal A\cap[0,\theta_{q,k,\ell,n}):|\Delta X_s|>L(r,k)\}
\quad\text{a.s.}
\]

We next prove the counting bound
\[
\textbf{Claim:}\qquad
\sum_{n=\ell}^\infty\sum_{m=1}^\infty
\bigl(1_{\{\cdot<\theta_{q,k,\ell,n}\}}\bigr)_{T_{\ell,n,m}-}
\,1_{\{|\Delta X^n_{T_{\ell,n,m}}|>L(r,k)\}}
\le q+1.
\]

\begin{proof}[Proof of the counting bound]
Fix \(\omega\) outside the null set on which the threshold-separated
alignment identities on \([0,\zeta_{\ell,k}(\omega))\) fail. Let
\[
\mathcal U_{\ell,q}(\omega)
:=
\Bigl\{
s\in \mathcal A(\omega):
s<\zeta_{\ell,k}(\omega),\ s\le \tau_q^X(\omega),\ |\Delta X_s(\omega)|>L(r,k)
\Bigr\}.
\]
Since \(\zeta_{\ell,k}(\omega)\le \sigma_k(\omega)\), no jump
magnitude \(|\Delta X_s(\omega)|\) with
\(s<\zeta_{\ell,k}(\omega)\) lies within distance \(\delta(k,r)\) of
\(L(r,k)\). Since moreover \(\zeta_{\ell,k}(\omega)\le \eta_k^{(\ell)}(\omega)\), we have
\[
\sup_{s<\zeta_{\ell,k}(\omega)}|\Delta(X^n-X)_s(\omega)|\le \delta(k,r)
\qquad\text{for all }n\ge \ell.
\]
Therefore, for every \(n\ge \ell\),
\[
\Bigl\{
s\in\mathcal A_n(\omega):
s<\zeta_{\ell,k}(\omega),\ |\Delta X^n_s(\omega)|>L(r,k)
\Bigr\}
=
\Bigl\{
s\in\mathcal A(\omega):
s<\zeta_{\ell,k}(\omega),\ |\Delta X_s(\omega)|>L(r,k)
\Bigr\}.
\]
We first bound the cardinality of \(\mathcal U_{\ell,q}(\omega)\).
Since \(\tau_q^X\) is the first time that \(N\) reaches level \(q\), there are at most \(q\) times
\(s\in\mathcal A(\omega)\) such that
\[
s\le \tau_q^X(\omega),\qquad |\Delta X_s(\omega)|>L(r,k).
\]
In particular,
\[
\#\mathcal U_{\ell,q}(\omega)\le q.
\]

Now suppose that for some \(n\ge \ell\) and \(m\ge 1\),
\[
\bigl(1_{\{\cdot<\theta_{q,k,\ell,n}\}}\bigr)_{T_{\ell,n,m}-}(\omega)
\,1_{\{|\Delta X^n_{T_{\ell,n,m}}(\omega)|>L(r,k)\}}=1.
\]
Then
\[
T_{\ell,n,m}(\omega)\le \theta_{q,k,\ell,n}(\omega)\le \zeta_{\ell,k}(\omega),
\qquad
T_{\ell,n,m}(\omega)\le \tau_{q,n}(\omega)\le \tau_q^X(\omega).
\]
We distinguish two cases.

If
\[
T_{\ell,n,m}(\omega)<\zeta_{\ell,k}(\omega),
\]
then, by the exact jump matching identity above,
\[
T_{\ell,n,m}(\omega)\in \mathcal A(\omega)
\qquad\text{and}\qquad
|\Delta X_{T_{\ell,n,m}}(\omega)|>L(r,k).
\]
Since also \(T_{\ell,n,m}(\omega)\le \tau_q^X(\omega)\), it follows that
\[
T_{\ell,n,m}(\omega)\in \mathcal U_{\ell,q}(\omega).
\]

%If instead
%\[
%T_{\ell,n,m}(\omega)=\zeta_{\ell,k}(\omega),
%\]
%then exact jump matching on \([0,\zeta_{\ell,k}(\omega))\) does not apply at that time itself. However, this can occur for at most one pair \((n,m)\). Indeed, for each fixed \(n\), the sequence \(\{T_{\ell,n,m}\}_{m\in\mathbb N}\) is an enumeration without repetitions, so there is at most one index \(m\) such that \(T_{\ell,n,m}(\omega)=\zeta_{\ell,k}(\omega)\). Moreover, the families \(\{T_{\ell,n,m}\}_{m\in\mathbb N}\) are pairwise disjoint in \(n\), so the same time \(\zeta_{\ell,k}(\omega)\) cannot appear for two different values of \(n\). Hence there exists at most one pair \((n,m)\) such that \(T_{\ell,n,m}(\omega)=\zeta_{\ell,k}(\omega)\).

If instead \(T_{\ell,n,m}(\omega)=\zeta_{\ell,k}(\omega)\), then the
exact matching identity on \([0,\zeta_{\ell,k}(\omega))\) does not apply
at that time. However, by the pairwise disjointness of the graphs
\([[T_{\ell,n,m}]]\), there is at most one index pair \((n,m)\) for which
\(T_{\ell,n,m}(\omega)=\zeta_{\ell,k}(\omega)\). Thus the endpoint
\(\zeta_{\ell,k}(\omega)\) contributes at most one additional term.

Therefore every nonzero term in the double sum corresponds either to a
time in \(\mathcal U_{\ell,q}(\omega)\), or to the single exceptional
time \(s=\zeta_{\ell,k}(\omega)\). Since the graphs
\([[T_{\ell,n,m}]]\), \(n\ge\ell\), \(m\ge1\), are pairwise disjoint, each
time in \(\mathcal U_{\ell,q}(\omega)\) is counted at most once. Hence
\[
\sum_{n=\ell}^\infty\sum_{m=1}^\infty
\bigl(1_{\{\cdot<\theta_{q,k,\ell,n}\}}\bigr)_{T_{\ell,n,m}-}(\omega)
\,1_{\{|\Delta X^n_{T_{\ell,n,m}}(\omega)|>L(r,k)\}}
\le
\#\mathcal U_{\ell,q}(\omega)+1
\le q+1.
\]
This proves the claim.
\end{proof}

By monotone convergence,
\begin{align*}
&\E\left[\sum_{n=\ell}^\infty\sum_{m=1}^\infty \int_{|x|>L(r,k)}\bigl(1_{\{\cdot<\theta_{q,k,\ell,n}\}}\bigr)_{T_{\ell,n,m}-}\nu_n(\{T_{\ell,n,m}\},dx)\right]
\\
=
\sum_{n=\ell}^\infty\sum_{m=1}^\infty &\E\left[\int_{|x|>L(r,k)}\bigl(1_{\{\cdot<\theta_{q,k,\ell,n}\}}\bigr)_{T_{\ell,n,m}-}\nu_n(\{T_{\ell,n,m}\},dx)\right]
\\
=
\sum_{n=\ell}^\infty\sum_{m=1}^\infty&\E\left[ \int_{|x|>L(r,k)}\bigl(1_{\{\cdot<\theta_{q,k,\ell,n}\}}\bigr)_{T_{\ell,n,m}-}\mu_n(\{T_{\ell,n,m}\},dx)\right]
\\
=
&\E\left[\sum_{n=\ell}^\infty\sum_{m=1}^\infty \int_{|x|>L(r,k)}\bigl(1_{\{\cdot<\theta_{q,k,\ell,n}\}}\bigr)_{T_{\ell,n,m}-}\mu_n(\{T_{\ell,n,m}\},dx)\right]
\\
=&\E\left[\sum_{n=\ell}^\infty\sum_{m=1}^\infty \bigl(1_{\{\cdot<\theta_{q,k,\ell,n}\}}\bigr)_{T_{\ell,n,m}-}1_{|\Delta X^n_{T_{\ell,n,m}}|>L(r,k)}\right]\le q+1,
\end{align*}
implying $\sum_{n=\ell}^\infty\sum_{m=1}^\infty \int_{|x|>L(r,k)}\bigl(1_{\{\cdot<\theta_{q,k,\ell,n}\}}\bigr)_{T_{\ell,n,m}-}\nu_n(\{T_{\ell,n,m}\},dx)<\infty$ a.s.. Since the graphs \([[T_{\ell,n,m}]]\) exhaust, up to evanescence,
\[
\bigcup_{n=\ell}^\infty
\left(
\mathcal A_n
\cap
\left\{
s:
\int_{|x|>L(r,k)}\nu_n(\{s\},dx)>0
\right\}
\cap
\{s\le\tau_{q,n}\}
\right),
\]
every nonzero term
\[
\bigl(1_{\{\cdot<\theta_{q,k,\ell,n}\}}\bigr)_{s-}
\int_{|x|>L(r,k)}\nu_n(\{s\},dx)
\]
corresponds, outside an evanescent set, to exactly one graph
\([[T_{\ell,n,m}]]\). Hence
\[
\sum_{n=\ell}^\infty\sum_{s\le t}
\bigl(1_{\{\cdot<\theta_{q,k,\ell,n}\}}\bigr)_{s-}
\int_{|x|>L(r,k)}\nu_n(\{s\},dx)
<\infty
\qquad\textsf{a.s.}
\]
and therefore
$$\sum_{s\le t}\sup_{n\ge \ell} \bigl(1_{\{\cdot<\theta_{q,k,\ell,n}\}}\bigr)_{s-}\int_{|x|>L(r,k)}\nu_n(\{s\},dx)
\le 
\sum_{n=\ell}^\infty\sum_{s\le t}\bigl(1_{\{\cdot<\theta_{q,k,\ell,n}\}}\bigr)_{s-} \int_{|x|>L(r,k)}\nu_n(\{s\},dx) <\infty\quad \textsf{ a.s..}$$
Note that if $\{U_l\}_{l\in\N}$ exhausts $\mathcal{A}\cap \{s\le \tau_{q}^X\}$ then by monotone convergence
\begin{align*}
\E\left[\sum_{s\le \tau_{q}^X} \int_{|x|>L(r,k)}\nu(\{s\},dx)\right]
&=
\E\left[\sum_{l=1}^\infty \int_{|x|>L(r,k)}\nu(\{U_l\},dx)\right]
\\
&=
\sum_{l=1}^\infty\E\left[\int_{|x|>L(r,k)}\nu(\{U_l\},dx)\right]
\\
&=
\sum_{l=1}^\infty\E\left[\int_{|x|>L(r,k)}\mu(\{U_l\},dx)\right]
\\
&=
\E\left[\sum_{l=1}^\infty 1_{|\Delta X_{U_l}|>L(r,k)}1_{U_l\le \tau_{q}^X}\right]
=
\E\left[\sum_{s\in\mathcal{A}} 1_{|\Delta X_{s}| > L(r,k)}1_{s\le \tau_{q}^X}\right]\le q,
\end{align*}
where we also used that since for a fixed jump time \(s\), $\mu(\{s\},dx)=\delta_{\Delta X_s}(dx)$.
This implies that $$\sum_{s\le \tau_{q}^X} \int_{|x|>L(r,k)}\nu(\{s\},dx)<\infty\quad\textsf{a.s..}$$ 
Since \(\theta_{q,k,\ell,n}\le \tau_q^X\) for all \(n\ge\ell\), the factor
$
\sup_{n\ge\ell}
\bigl(1_{\{\cdot<\theta_{q,k,\ell,n}\}}\bigr)_{s-}
$
can be nonzero only at times \(s\le \tau_q^X\). Hence
\[
\sum_{s\le t}\sup_{n\ge\ell}
\bigl(1_{\{\cdot<\theta_{q,k,\ell,n}\}}\bigr)_{s-}
\int_{|x|>L(r,k)}\nu(\{s\},dx)
\le
\sum_{s\le \tau_q^X}
\int_{|x|>L(r,k)}\nu(\{s\},dx)
<\infty \quad\textsf{a.s.}.
\]
\end{proof}

Having established the two core mechanisms underlying the argument, we now
turn to the proof of the main result.
\section{Proof of Theorem \ref{thm:main-compensated-jump-stability}}\label{main}

\begin{proof}
Let \(\{n_j\}_{j\in\mathbb N}\) be an arbitrary subsequence. Passing to a further subsequence and relabelling, we may assume that
\[
        [X^n-X]_t\to0
        \qquad\text{a.s..}
\]
By the subsequence characterization of convergence in probability, it is enough to prove the asserted convergence under this additional almost sure assumption.
Let $T$ be a jump time of $X$. Since
\[
|\Delta(X^n-X)_T|
\le [X^n-X]_t^{1/2},
\]
we have
$
\Delta X^n_{T}\xrightarrow{a.s.}\Delta X_{T}.
$
If we let $B^n_a=\left\{(\Delta X^n)^*_t\vee \Delta X^*_t\le a\right\}$ then
\[
\lim_{a\to\infty}\sup_n \mathbb P((B^n_a)^c)=0.
\]
%For \(b>0\), we write \(C_b\) for a constant obtained from the local linear
%growth assumption with \(K=[-b,b]\), so that
%\[
%\sup_n |f_n(s,x,\omega)|+|f(s,x,\omega)|
%\le C_b|x|
%\]
%on \([0,t]\times[-b,b]\times\Omega\).
%We choose these constants monotonically in \(b\).
For \(a>0\), let \(C^a\) be the predictable locally bounded envelope from
the assumptions of the theorem, and define
\[
s_{a,m}:=\inf\{u>0:C^a_u>m\}.
\]
Since \(C^a\) is locally bounded on \([0,t]\), for almost every
\(\omega\) there exists \(m_0(\omega)\) such that
\[
C^a_u(\omega)\le m_0(\omega),
\qquad u\le t.
\]
Hence
\[
s_{a,m}>t
\]
for all sufficiently large \(m\), and therefore
\[
\mathbb P(s_{a,m}\le t)\to0\qquad\text{as }m\to\infty.
\]
On \([0,s_{a,m})\),
\[
|f_n(u,x,\omega)|+|f(u,x,\omega)|
\le m|x|,
\qquad u\le t,\ |x|\le a,\ n\ge1.
\]
Thus all estimates below may first be carried out for the stopped
integrands
\[
1_{\{u<s_{a,m}\}}f_n(u,x,\omega),
\qquad
1_{\{u<s_{a,m}\}}f(u,x,\omega),
\]
where the deterministic bound \(m|x|\) holds on \(|x|\le a\). Let
\[
Y^n_u
:=
\int_0^u\int_{\mathbb R}f_n(s,x)(\mu_n-\nu_n)(ds,dx)
-
\int_0^u\int_{\mathbb R}f(s,x)(\mu-\nu)(ds,dx).
\]
For every \(\varepsilon>0\),
\[
\mathbb P\bigl((Y^n)^*_t\ge\varepsilon\bigr)
\le
\mathbb P\bigl((Y^n)^*_{s_{a,m}-}\ge\varepsilon\bigr)
+
\mathbb P(s_{a,m}\le t).
\]
Since
\[
\mathbb P(s_{a,m}\le t)\to0,
\]
it suffices to prove the desired convergence for each fixed \(m\) after
localization. For notational simplicity, we suppress the stopping indicator in the sequel.

Throughout, square brackets denote quadratic variation. In particular, for compensated jump integrals, expressions of the form $[M]
_t$ refer to their quadratic variation processes. Using Lemma \ref{triangle}, we now make the following bound
\begin{align}\label{grisny2}
%RAD 1:
&\left[\int_0^. \int_{\R} f_n(s,x,\omega)(\mu_n-\nu_n)(ds,dx)-\int_0^. \int_{\R} f(s,x,\omega)(\mu-\nu)(ds,dx) \right]^\frac 12_t1_{B^n_a}\nonumber
\\
%RAD 2
\le &\left[\int_0^. \int_{|x|\le a}\left(f_n(s,x,\omega)-f(s,x,\omega) \right)(\mu-\nu)(ds,dx)\right]_t^{\frac 12}\nonumber
\\
%RAD 3
+&\left[\int_0^. \int_{|x|\le a}f_n(s,x,\omega)(\mu-\nu)(ds,dx)-\int_0^. \int_{|x|\le a}f_n(s,x,\omega)(\mu_n-\nu_n)(ds,dx)\right]_t^{\frac 12}\nonumber
\\
%RAD 4
\le &\left(2\sum_{s\le t}\left(\int_{|x|\le a}\left|f_n(s,x,\omega)-f(s,x,\omega)\right|\mu(\{s\},dx)\right)^2\right)^{\frac 12}
+
\left(2\sum_{s\le t}\left(\int_{|x|\le a}\left|f_n(s,x,\omega)-f(s,x,\omega)\right|\nu(\{s\},dx)\right)^2\right)^{\frac 12}\nonumber
\\
%RAD 5
+&\left[\int_0^. \int_{|x|\le a}f_n(s,x,\omega)(\mu-\nu)(ds,dx)-\int_0^. \int_{|x|\le a}f_n(s,x,\omega)(\mu_n-\nu_n)(ds,dx)\right]_t^{\frac 12}.
\end{align}

Here we also used the fact that if \(h|(s,x,\omega)|\leq H|x|\) on \([0,t]\times[-a,a]\), for some \(H\in\mathbb R^+\) then
\begin{align*}
\left[
\int_0^\cdot \int_{|x|\le a}
h(s,x,\omega)(\mu-\nu)(ds,dx)
\right]_t
&=
\sum_{s\le t}
\left(
\int_{|x|\le a}
h(s,x,\omega)(\mu-\nu)(\{s\},dx)
\right)^2
\\
&=
\sum_{s\le t}
\left(
\int_{|x|\le a}h(s,x,\omega)\mu(\{s\},dx)
-
\int_{|x|\le a}h(s,x,\omega)\nu(\{s\},dx)
\right)^2
\\
&\le
\sum_{s\le t}
\left(
\int_{|x|\le a}|h(s,x,\omega)|\mu(\{s\},dx)
+
\int_{|x|\le a}|h(s,x,\omega)|\nu(\{s\},dx)
\right)^2
\\
&\le
\sum_{s\le t}
2\left(
\int_{|x|\le a}|h(s,x,\omega)|\mu(\{s\},dx)
\right)^2
+
\sum_{s\le t}
2\left(
\int_{|x|\le a}|h(s,x,\omega)|\nu(\{s\},dx)
\right)^2,
\end{align*}
which follows from the fact that compensated jump integrals are purely discontinuous local martingales, and therefore their quadratic variation is given by the sum of squared jumps.
\\
\textbf{Step 1: Integrand approximation terms.}
\\
\medskip
For the first contribution on the right-hand side of \eqref{grisny2},
\begin{align*}
\sum_{s\le t}\left(\int_{|x|\le a}\left|f_n(s,x,\omega)-f(s,x,\omega)\right|\mu(\{s\},dx)\right)^2
&\le
\sum_{s\le t}\mu(\{s\},[-a,a])\int_{|x|\le a}\left|f_n(s,x,\omega)-f(s,x,\omega)\right|^2\mu(\{s\},dx)
\\
&\le
\int_0^{t}\int_{|x|\le a}\left|f_n(s,x,\omega)-f(s,x,\omega)\right|^2\mu(ds,dx)
\\
&=\sum_{s\le t} \left|f_n(s,\Delta X_s,\omega)-f(s,\Delta X_s,\omega)\right|^21_{|\Delta X_s|\le a}.
\end{align*}
where we used the Cauchy-Schwarz inequality and the fact that $\mu(\{s\},[-a,a])\le 1$.

We claim that, for every \(a>0\),
\[
\sum_{s\le t}
|f_n(s,\Delta X_s,\omega)-f(s,\Delta X_s,\omega)|^2
1_{\{|\Delta X_s|\le a\}}
\longrightarrow 0 .
\]

Fix \(a>0\) and \(\omega\) outside the relevant null set. Let \(0<r<a\). Write
\[
\begin{aligned}
S_n(\omega)
&:=
\sum_{s\le t}
|f_n(s,\Delta X_s,\omega)-f(s,\Delta X_s,\omega)|^2
1_{\{|\Delta X_s|\le a\}}
\\
&=
S_{n,r}^{\mathrm{small}}(\omega)
+
S_{n,r}^{\mathrm{large}}(\omega),
\end{aligned}
\]
where
\[
S_{n,r}^{\mathrm{small}}(\omega)
:=
\sum_{s\le t}
|f_n(s,\Delta X_s,\omega)-f(s,\Delta X_s,\omega)|^2
1_{\{|\Delta X_s|\le r\}},
\]
and
\[
S_{n,r}^{\mathrm{large}}(\omega)
:=
\sum_{s\le t}
|f_n(s,\Delta X_s,\omega)-f(s,\Delta X_s,\omega)|^2
1_{\{r<|\Delta X_s|\le a\}}.
\]

By the local linear bound,
\[
\begin{aligned}
S_{n,r}^{\mathrm{small}}(\omega)
&\le
\sum_{s\le t}
\bigl(|f_n(s,\Delta X_s,\omega)|+|f(s,\Delta X_s,\omega)|\bigr)^2
1_{\{|\Delta X_s|\le r\}}
\\
&\le
m^2
\sum_{s\le t}
(\Delta X_s)^2
1_{\{|\Delta X_s|\le r\}}.
\end{aligned}
\]

On the other hand, the set
$
\{s\le t:r<|\Delta X_s(\omega)|\le a\}
$
is finite, since \(X(\omega)\) is càdlàg. For each such \(s\), the local
uniform convergence in the \(x\)-variable gives
$
f_n(s,\Delta X_s(\omega),\omega)
\longrightarrow
f(s,\Delta X_s(\omega),\omega).
$
Therefore,
$
S_{n,r}^{\mathrm{large}}(\omega)\longrightarrow0.
$

Consequently,
\[
\limsup_{n\to\infty} S_n(\omega)
\le
m^2
\sum_{s\le t}
(\Delta X_s)^2
1_{\{|\Delta X_s|\le r\}}.
\]
Letting \(r\downarrow0\), and using
$
\sum_{s\le t}(\Delta X_s)^2<\infty,
$
we obtain
$
\sum_{s\le t}
(\Delta X_s)^2
1_{\{|\Delta X_s|\le r\}}
\longrightarrow0.
$
Hence
$
\limsup_{n\to\infty} S_n(\omega)=0,
$
which proves
\[
\sum_{s\le t}
|f_n(s,\Delta X_s,\omega)-f(s,\Delta X_s,\omega)|^2
1_{\{|\Delta X_s|\le a\}}
\longrightarrow0.
\]
For the second contribution on the right-hand side of \eqref{grisny2}, note that
\begin{align*}
\sum_{s\le t}\left(\int_{|x|\le a}\left|f_n(s,x,\omega)-f(s,x,\omega)\right|\nu(\{s\},dx)\right)^2
&\le
\sum_{s\le t}\nu(\{s\},[-a,a])\int_{|x|\le a}\left|f_n(s,x,\omega)-f(s,x,\omega)\right|^2\nu(\{s\},dx)
\\
&\le
\int_0^{t}\int_{|x|\le a}\left|f_n(s,x,\omega)-f(s,x,\omega)\right|^2\nu(ds,dx),
\end{align*}
where we used the Cauchy-Schwarz inequality and the fact that 
$$\nu(\{s\},[-a,a])=\E\left[\mu(\{s\},[-a,a])\|\F_{s-}\right]\le 1.$$
For $q\in\N$, define the stopping times
$$t_{q}=\inf\left\{s>0: [X]_s\ge q\right\} .$$
Note that
\begin{align*}
\E\left[\int_0^{t_{q}}\int_{|x|\le a}\left|f_n(s,x,\omega)-f(s,x,\omega)\right|^2\nu(ds,dx)\right]
=\E\left[\int_0^{t_q}\int_{|x|\le a}\left|f_n(s,x,\omega)-f(s,x,\omega)\right|^2\mu(ds,dx)\right],
\end{align*}
where the right-hand side converges to zero by dominated convergence, since
$$\int_0^{t_q}\int_{|x|\le a}\left|f_n(s,x,\omega)-f(s,x,\omega)\right|^2\mu(ds,dx)\le m^2(q+a^2). $$
By the Markov inequality it follows that 
$$\int_0^{t_q}\int_{|x|\le a}\left|f_n(s,x,\omega)-f(s,x,\omega)\right|^2\nu(ds,dx)$$ 
converges to zero in probability for every $a$ and $q$. By Markov’s inequality it follows that, for every fixed \(a>0\)
and \(q\in\mathbb N\),
\[
\int_0^{t_q}\int_{|x|\le a}
|f_n(s,x,\omega)-f(s,x,\omega)|^2
\nu(ds,dx)
\overset{P}{\longrightarrow}0
\]
as \(n\to\infty\).

Combining this localized convergence with the stopping-time decomposition,
\begin{align*}
&\P\left(\int_0^{t}\int_{|x|\le a}\left|f_n(s,x,\omega)-f(s,x,\omega)\right|^2\nu(ds,dx) \ge \epsilon\right) 
\\
\le &\P\left(\int_0^{t_q}\int_{|x|\le a}\left|f_n(s,x,\omega)-f(s,x,\omega)\right|^2\nu(ds,dx) \ge \epsilon\right) + \P\left(t_q\le t \right),
\end{align*}
which converges to zero since $t_q\uparrow \infty$ as $q\to\infty$.
\\
\medskip
\textbf{Step 2: The jump structure approximation term.}
\\
\medskip
We next analyze the jump structure approximation term in \eqref{grisny2}. For any $r>0$ and $k\in\N$, we consider the region
$L(r,k)<|x|\le a$, where the threshold $L(r,k)$ is chosen according to the
Threshold Isolation Principle of Lemma \ref{lem:thres}. On this
region, the difference between the two compensated jump integrals can be
decomposed into a $(\mu-\mu_n)$ term and a $(\nu-\nu_n)$ term. By Lemma \ref{triangle} and a rearrangement, we have for any $r>0$ and $k\in\N$,
\begin{align}\label{trm5}
&\left[\int_0^. \int_{|x|\le a}f_n(s,x,\omega) (\mu-\nu)(ds,dx)-\int_0^. \int_{|x|\le a}f_n(s,x,\omega)(\mu_n-\nu_n)(ds,dx)\right]_t^{\frac 12}\nonumber
\\
&\le \left[\int_0^.\int_{|x|\le L(r,k)}f_n(s,x,\omega)(\mu-\nu)(ds,dx)\right]_t^{\frac 12}+\left[\int_0^.\int_{|x|\le L(r,k)}f_n(s,x,\omega)(\mu_n-\nu_n)(ds,dx)\right]_t^{\frac 12}\nonumber
\\
&+\left[\int_0^.\int_{L(r,k)<|x|\le a}f_n(s,x,\omega)(\mu-\mu_n)(ds,dx)\right]_t^{\frac 12}+\left[\int_0^.\int_{L(r,k)<|x|\le a}f_n(s,x,\omega)(\nu-\nu_n)(ds,dx)\right]_t^{\frac 12},
\end{align} 
\medskip

We handle the four terms on the right-hand side of \eqref{trm5} separately. For the large-jump \(\mu-\mu^n\) term we first use the Threshold Isolation Principle to preserve the threshold classification of the jumps, reducing the
difference to a comparison of corresponding atomic contributions. The resulting
pointwise convergence of the atomic integrands is then obtained from Corollary
\ref{cor:large-jump-atomic-integrands-same-fn}. For the large-jump \(\nu-\nu^n\) term we argue by localization together with the compensator mass control provided by Lemma~\ref{complemma}. Corollary~\ref{cor:large-jump-atomic-integrands-same-fn} yields pointwise convergence of the threshold-separated atomic contributions, while Lemma~\ref{complemma} provides an almost surely summable dominating series for the corresponding compensator masses. This allows an application of dominated convergence for series.
\\
\medskip
\textbf{Step 2a: Small jumps for the fixed jump measure.}
\\
\medskip
For the first term on the right-hand side of \eqref{trm5} we have
\begin{align}\label{muminusnu}
&\left[\int_0^.\int_{|x|\le L(r,k)}f_n(s,x,\omega)(\mu-\nu)(ds,dx)\right]_t^{\frac 12}\nonumber
\\
=&\left(\sum_{s\le t}\left(\int_{|x|\le L(r,k)} f_n(s,x,\omega) (\mu-\nu)(\{s\},dx)  \right)^2\right)^{\frac 12}\nonumber
\\
\le&\left(\sum_{s\le t}\left(\int_{|x|\le L(r,k)} \left|f_n(s,x,\omega)\right| \mu(\{s\},dx)
+\int_{|x|\le L(r,k)} \left|f_n(s,x,\omega)\right| \nu(\{s\},dx)  \right)^2\right)^{\frac 12}\nonumber
\\
\le
&\sqrt{2}\left(\sum_{s\le t}\left(\int_{|x|\le L(r,k)} \left|f_n(s,x,\omega)\right| \mu(\{s\},dx)\right)^2\right)^{\frac 12}\nonumber
+\sqrt{2}\left(\sum_{s\le t}\left(\int_{|x|\le L(r,k)} \left|f_n(s,x,\omega)\right| \nu(\{s\},dx)  \right)^2\right)^{\frac 12}\nonumber
\\
\le
&\sqrt{2}\left(\sum_{s\le t}\mu(\{s\},\R)\int_{|x|\le L(r,k)}f_n(s,x,\omega)^2 \mu(\{s\},dx)  \right)^{\frac 12}\nonumber
+\sqrt{2}\left(\sum_{s\le t}\nu(\{s\},\R)\int_{|x|\le L(r,k)}f_n(s,x,\omega)^2 \nu(\{s\},dx)  \right)^{\frac 12}\nonumber
\\
\le
&\sqrt{2}\left(\int_0^t\int_{|x|\le L(r,k)}f_n(s,x,\omega)^2\mu(ds,dx)\right)^{\frac 12}
+
\sqrt{2}\left(\int_0^t\int_{|x|\le L(r,k)}f_n(s,x,\omega)^2\nu(ds,dx)\right)^{\frac 12}.
\end{align}
We have
\begin{align*}
\left(
\int_0^t\int_{|x|\le L(r,k)}
f_n(s,x,\omega)^2
\mu(ds,dx)
\right)^{\frac12}
&\le
\left(
\int_0^t\int_{|x|\le L(r,k)}
m^2x^2
\mu(ds,dx)
\right)^{\frac12}
\\
&\le
m
\left(
\sum_{s\le t}
(\Delta X_s)^2
1_{|\Delta X_s|\le L(r,k)}
\right)^{\frac12},
\end{align*}
which converges to zero as \(r\downarrow0\). For the second contribution on the right-hand side of \eqref{muminusnu}, define the stopping times $\{t_{q}\}_q$. Note that
\begin{align*}
\E\left[\int_0^{t_{q}}\int_{|x|\le L(r,k)}f_n(s,x,\omega)^2\nu(ds,dx)\right]
&=
\E\left[\int_0^{t_{q}}\int_{|x|\le L(r,k)}f_n(s,x,\omega)^2\mu(ds,dx)\right]
\end{align*}
but 
\begin{align*}
\int_0^{t_{q}}\int_{|x|\le L(r,k)}f_n(s,x,\omega)^2\mu(ds,dx)
\le m^2 \sum_{s\le t_{q}} (\Delta X_s)^21_{|\Delta X_s|\le L(r,k)}
\le m^2(q+r^2), 
\end{align*}
%so by dominated convergence we can let $r\to 0^+$ to show that also this term vanishes for every $q$, apply the Markov inequality and let $q\to\infty$ to get 
%$$\lim_{r\to 0^+}\sup_n\P\left(\int_0^t\int_{|x|\le L(r,k)}f_n(s,x,\omega)^2x^2\nu(ds,dx)\ge \epsilon\right). $$
Therefore, by dominated convergence,
\[
\lim_{r\to 0^+}\sup_n
\E\left[\int_0^{t_q}\int_{|x|\le L(r,k)}
f_n(s,x,\omega)^2\,\nu(ds,dx)\right]
=0.
\]
By Markov's inequality it follows that, for every fixed $q$,
\[
\lim_{r\to 0^+}\sup_n\P\left(\int_0^{t_q}\int_{|x|\le L(r,k)}f_n(s,x,\omega)^2\nu(ds,dx)\ge \epsilon\right)=0.
\]
Define
\[
I_r^n
:=
\int_0^t\int_{|x|\le L(r,k)}
f_n(s,x,\omega)^2\,\nu(ds,dx).
\]
Then
\[
\P(I_r^n\ge \epsilon)
\le
\P\left(
\int_0^{t_q}\int_{|x|\le L(r,k)}
f_n(s,x,\omega)^2\,\nu(ds,dx)\ge \epsilon
\right)
+\P(t_q\le t).
\]
Hence
\[
\limsup_{r\to0^+}\sup_n \P(I_r^n\ge \epsilon)
\le
\limsup_{r\to0^+}\sup_n
\P\left(
\int_0^{t_q}\int_{|x|\le L(r,k)}
f_n(s,x,\omega)^2\,\nu(ds,dx)\ge \epsilon
\right)
+\P(t_q\le t).
\]
By the previous step, the first term on the right tends to $0$ as $r\to0^+$ for each fixed $q$. Therefore
\[
\limsup_{r\to0^+}\sup_n \P(I_r^n\ge \epsilon)
\le
\P(t_q\le t).
\]
Letting $q\to\infty$ and using that $t_q\uparrow \infty$ a.s., we conclude that
\[
\lim_{r\to0^+}\sup_n \P(I_r^n\ge \epsilon)=0.
\]
\\
\medskip
\textbf{Step 2b: Small jumps for the approximating jump measures.}
\\
\medskip
The second term of \eqref{trm5} equals 
\begin{align}\label{bajs}
\left(\left[\int_0^.\int_{|x|\le L(r,k)}f_n(s,x,\omega)(\mu_n-\nu_n)(ds,dx)\right]_t\right)^\frac 12
&=\left(\sum_{s\le t}\left(\int_{|x|\le L(r,k)}f_n(s,x,\omega)(\mu_n-\nu_n)(\{s\},dx)\right)^2\right)^\frac 12\nonumber
\\
&\le \sqrt{2}\left(\int_0^t\int_{|x|\le L(r,k)}f_n(s,x,\omega)^2\mu_n(ds,dx)\right)^\frac 12 \nonumber
\\
&+ \sqrt{2}\left(\int_0^t\int_{|x|\le L(r,k)}f_n(s,x,\omega)^2\nu_n(ds,dx)\right)^\frac 12.
\end{align}
We have
\[
\left(
\int_0^t\int_{|x|\le L(r,k)}
f_n(s,x,\omega)^2
\mu_n(ds,dx)
\right)^{\frac12}
\le
m
\left(
\sum_{s\le t}
(\Delta X^n_s)^2
1_{|\Delta X^n_s|\le L(r,k)}
\right)^{\frac12}.
\]
According to Lemma \ref{lem:thres} we have on the set $A_k(r)\cap B_n(k,r)$, 
\begin{align}\label{asd}
&\sum_{s\le t} (\Delta X^n_s)^21_{|\Delta X^n_s|\le L(r,k)}\le 2[X^n-X]_t+2\sum_{s\le t} (\Delta X_s)^21_{|\Delta X_s|\le L(r,k)}.
\end{align}
The first term converges to zero in probability as \(n\to\infty\). The second
term does not depend on \(n\). Moreover, since \(L(r,k)<r\),
\[
        \sum_{s\le t}(\Delta X_s)^2
        1_{\{|\Delta X_s|\le L(r,k)\}}
        \le
        \sum_{s\le t}(\Delta X_s)^2
        1_{\{|\Delta X_s|\le r\}},
\]
the right-hand side converges to zero as \(r\downarrow0\). Thus the contribution of the second term can be made arbitrarily
small uniformly in \(n\) by choosing \(r>0\) sufficiently small.
Therefore, for every fixed \(\epsilon>0\),
\begin{align*}
\P\left(
m
\left(
\sum_{s\le t}
(\Delta X^n_s)^2
1_{|\Delta X^n_s|\le L(r,k)}
\right)^{\frac12}
\ge \epsilon
\right)
&\le
\P\left(
m\sqrt{2}[X^n-X]_t^{\frac12}
\ge
\frac{\epsilon}{2}
\right)
+
\P\left(
\left(A_k(r)\cap B_n(k,r)\right)^c
\right)
\\
&\le
\P\left(
m\sqrt{2}[X^n-X]_t^{\frac12}
\ge
\frac{\epsilon}{2}
\right)
+
\P\left(A_k(r)^c\right)
+
\P\left(B_n(k,r)^c\right).
\end{align*}

%Therefore, if we fix $\epsilon>0$ we have that, by choosing $r$ sufficiently small,
%\begin{align*}
%\P\left(C_a\left(\sum_{s\le t} (\Delta X^n_s)^21_{|\Delta X^n_s|\le L(r,k)}\right)^\frac 12\ge \epsilon\right)
%&\le  
%\P\left(C_a\sqrt{2}[X^n-X]_t^{\frac12}\ge \frac{\epsilon}{2} \right)
%+\P\left(\left(B^n_a\cap A_k(r)\cap B_n(k,r)\right)^c\right)
%\\
%&\le \P\left(C_a\sqrt{2}[X^n-X]_t^{\frac12}\ge \frac{\epsilon}{2} \right) +\P\left(A_k(r)^c\right)+\P\left(B_n(k,r)^c\right)+\P\left((B^n_a)^c\right).
%\end{align*}
For the second term of the right-most side of \eqref{bajs}, define the stopping times 
\[
\rho_k
:=
\inf\bigl\{s>0: |\Delta X_s|\in [L(r,k)-\delta(k,r),L(r,k)+\delta(k,r)]\bigr\},
\]
\[
\eta_{n,k}
:=
\inf\bigl\{s>0: |\Delta(X^n-X)_s|\ge \delta(k,r)\bigr\},
\]
\[
\sigma_q
:=
\inf\bigl\{s>0: [X]_s\ge q\bigr\},
\qquad
\sigma_{q,n}
:=
\inf\bigl\{s>0: [X^n]_s\ge q\bigr\},
\]
and
\[
\lambda_{q,n,k}
:=
\rho_k\wedge \eta_{n,k}\wedge \sigma_q\wedge \sigma_{q,n}.
\]
Note,
\begin{align*}
\E\left[\int_0^{\lambda_{q,n,k}-}\int_{|x|\le L(r,k)}f_n(s,x,\omega)^2\nu_n(ds,dx)\right]
&=
\E\left[\int_0^{\lambda_{q,n,k}-}\int_{|x|\le L(r,k)}f_n(s,x,\omega)^2\mu_n(ds,dx)\right]
\\
&\le
\E\left[\int_0^{\lambda_{q,n,k}-}\int_{|x|\le L(r,k)}m^2x^2\mu_n(ds,dx)\right]
\\
&\le m^2\left(2\E\left[[X^n-X]_{\lambda_{q,n,k}-}\right] +2\E\left[\sum_{s< \lambda_{q,n,k}} (\Delta X_s)^21_{|\Delta X_s|\le L(r,k)}\right]\right),
\end{align*}
for every fixed \(r,q,k\), the first term on the right-hand side converges to
zero as \(n\to\infty\) by dominated convergence, since
\(
[X^n-X]_{\lambda_{q,n,k}-}\le q
\).
The second term does not depend on \(n\), and converges to zero as
\(r\downarrow0\) by dominated convergence. By Markov's inequality this implies that $\int_0^{\lambda_{q,n,k}-}\int_{|x|\le L(r,k)}f_n(s,x,\omega)^2\nu_n(ds,dx)$ converges to zero in probability by first letting $n\to\infty$ and then $r\to 0^+$, for each $q$ and $k$. Combining the previous localization estimate with the preceding convergence bound,
\begin{align*}
\P\left(\int_0^{t}\int_{|x|\le L(r,k)}f_n(s,x,\omega)^2\nu_n(ds,dx)\ge \epsilon\right)
\le
\P\left(\int_0^{\lambda_{q,n,k}-}\int_{|x|\le L(r,k)}f_n(s,x,\omega)^2\nu_n(ds,dx)\ge \epsilon\right)
+\P\left(\lambda_{q,n,k}\le t\right)
\end{align*}
the first term has been shown to converge to zero for each $k$ and $q$, for the second term we have
$$\P\left(\lambda_{q,n,k}<t\right)\le \P\left(A_k(r)^c\right)+\P\left(B_n(k,r)^c\right)+\P\left( [X^n]_t\ge q\right)+\P\left( [X]_t\ge q\right).$$
we can first choose $n$ large to make the second term small. We then choose $k$ large to make the first term small and finally we choose $q$ large to make the last two terms small, this is possible since $\{[X^n]_t\}_{n\in\N}$ is bounded in probability due to the fact that $[X^n-X]_t\xrightarrow{\P}0$ and $[X^n]_t\le 2[X^n-X]_t+2[X]_t$.

We have
\[
\P\left(\lambda_{q,n,k}\le t\right)
\le
\P\left(A_k(r)^c\right)
+
\P\left(B_n(k,r)^c\right)
+
\P\left([X^n]_t\ge q\right)
+
\P\left([X]_t\ge q\right).
\]
Now fix $\eta>0$. First choose $k$ so large that
$
\P\left(A_k(r)^c\right)<\eta.
$
For this fixed $k$, since $\P(B_n(k,r)^c)\to0$ as $n\to\infty$, there exists $N_1$ such that
\[
\P\left(B_n(k,r)^c\right)<\eta
\qquad\text{for all } n\ge N_1.
\]
Moreover, since
$
[X^n]_t\le 2[X]_t+2[X^n-X]_t,
$
and $[X^n-X]_t\to0$ in probability, the family $\{[X^n]_t\}_{n\in\mathbb N}$ is bounded in probability. Hence we may choose $q$ so large that
\[
\P([X]_t\ge q)<\eta
\qquad\text{and}\qquad
\sup_{n\in\mathbb N}\P([X^n]_t\ge q)<\eta.
\]
Therefore, for all $n\ge N_1$,
$
\P\left(\lambda_{q,n,k}<t\right)<4\eta
$. As $\eta$ is arbitrary we conclude that 
$$\P\left(\int_0^{t}\int_{|x|\le L(r,k)}f_n(s,x,\omega)^2\nu_n(ds,dx)\ge \epsilon\right)$$
converges to zero as $n\to\infty$.
\\
\medskip
\textbf{Step 2c: The large-jump \(\mu-\mu^n\) contribution.}
\\
\medskip
Next, for the third term on the right-hand side of \eqref{trm5}, we first write
\begin{align*}
&\left[
\int_0^\cdot\int_{L(r,k)<|x|\le a}
f_n(s,x,\omega)(\mu-\mu_n)(ds,dx)
\right]_t  \\
&=
\sum_{s\le t}
\left(
f_n(s,\Delta X_s,\omega)
1_{\{L(r,k)<|\Delta X_s|\le a\}}
-
f_n(s,\Delta X^n_s,\omega)
1_{\{L(r,k)<|\Delta X^n_s|\le a\}}
\right)^2 .
\end{align*}
On the event \(A_k(r)\cap B_n(k,r)\cap B_a^n\), the Threshold Isolation Principle preserves the indicator \(1_{\{L(r,k)<|\Delta X_s|\le a\}}\).
Hence, on this event, the preceding expression is equal to
\[
\sum_{s\le t}
\left(
f_n(s,\Delta X_s,\omega)-f_n(s,\Delta X^n_s,\omega)
\right)^2
1_{\{L(r,k)<|\Delta X_s|\le a\}}.
\]
The latter sum contains only finitely many nonzero terms. For each such time
\(s\), Corollary~\ref{cor:large-jump-atomic-integrands-same-fn} gives
\[
f_n(s,\Delta X_s,\omega)-f_n(s,\Delta X^n_s,\omega)\to0.
\]
Therefore the sum converges to zero on \(A_k(r)\cap B_n(k,r)\cap B_a^n\).
Since \(P(B_n(k,r)^c)\to0\), \(P((B_a^n)^c)\to0\) as \(a\to\infty\), and
\(P(A_k(r)^c)\) can be made arbitrarily small by the choice of \(k\), the third
term on the right-hand side of \eqref{trm5} converges to zero in probability.
\\
\medskip
\textbf{Step 2d: The large-jump \(\nu-\nu^n\) contribution.}
\\
\medskip
For the final term of \eqref{trm5} define
\[
H^n_u
:=
\int_0^u\int_{L(r,k)<|x|\le a} f_n(s,x,\omega)\,(\nu-\nu_n)(ds,dx).
\]
Since $(\nu-\nu_n)$ is purely atomic in time, $H^n$ is a pure jump process and
\[
[H^n]_t
=
\sum_{s\le t}
\left(
\int_{L(r,k)<|x|\le a} f_n(s,x,\omega),(\nu-\nu_n)(\{s\},dx)
\right)^2.
\]

Now, since $f_n(s,x,\omega)1_{\{L(r,k)<|x|\le a\}}$ is predictable, the compensator identity yields, for \(s\) ranging over a thin predictable set,
\[
\int_{L(r,k)<|x|\le a} f_n(s,x,\omega)\nu(\{s\},dx)
=
\E\!\left[
\int_{L(r,k)<|x|\le a} f_n(s,x,\omega)\mu(\{s\},dx)
\Bigm|\mathcal F_{s-}
\right],
\]
and similarly
\[
\int_{L(r,k)<|x|\le a} f_n(s,x,\omega)\nu_n(\{s\},dx)
=
\E\!\left[
\int_{L(r,k)<|x|\le a} f_n(s,x,\omega)\mu_n(\{s\},dx)
\Bigm|\mathcal F_{s-}
\right].
\]
Hence
\[
\int_{L(r,k)<|x|\le a} f_n(s,x,\omega)(\nu-\nu_n)(\{s\},dx)
=
\E\!\left[
\int_{L(r,k)<|x|\le a} f_n(s,x,\omega)(\mu-\mu_n)(\{s\},dx)
\Bigm|\mathcal F_{s-}
\right].
\]
By Jensen's inequality,
\[
\left(
\int_{L(r,k)<|x|\le a} f_n(s,x,\omega)(\nu-\nu_n)(\{s\},dx)
\right)^2
\le
\E\!\left[
\left(
\int_{L(r,k)<|x|\le a} f_n(s,x,\omega)(\mu-\mu_n)(\{s\},dx)
\right)^2
\Bigm|\mathcal F_{s-}
\right].
\]
On the set
\[
        \{(s,x):L(r,k)<|x|\le a\},
\]
the local linear growth assumption yields
\[
        |f_n(s,x,\omega)|^2
        \le
        m^2 x^2.
\]
Using the inequality $(a-b)^2\le 2a^2+2b^2$, 
\begin{align*}
&\left(
\int_{L(r,k)<|x|\le a} f_n(s,x,\omega),(\mu-\mu_n)(\{s\},dx)
\right)^2
\\
\le
&2\left(\int_{L(r,k)<|x|\le a} f_n(s,x,\omega),\mu(\{s\},dx)\right)^2 
+ 2\left(\int_{L(r,k)<|x|\le a} f_n(s,x,\omega),\mu_n(\{s\},dx)\right)^2
\\
=
&2 f_n(s,x,\omega)^21_{L(r,k)<|\Delta X_s|\le a} + 2 f_n(s,x,\omega)^21_{L(r,k)<|\Delta X^n_s|\le a}
\le 4m^2a^2,
\end{align*}
which is locally integrable.

Fix \(k\ge1\), and let
\[
\sigma_k
:=
\inf\Bigl\{
u\ge0:
0<\bigl||\Delta X_u|-L(r,k)\bigr|<\delta(k,r)
\Bigr\}.
\]
Note that
\(\{\sigma_k\le t\}\subseteq A_k(r)^c\). Since \(\P(A_k(r)^c)\to0\) as \(k\to\infty\) by the Threshold Isolation Principle, it follows that
\[
\P(\sigma_k\le t)\to0
\qquad\text{as }k\to\infty.
\]
Let
\[
        \iota_a:=\inf\{u\le t:|\Delta X_u|\ge a\}.
\]

Then Corollary~\ref{cor:large-jump-atomic-integrands-same-fn}, applied on
the stopped interval \([0,\sigma_k\wedge\iota_a)\), yields, for each fixed
predictable jump time \(s\) ranging over the relevant thin predictable set,
\[
\bigl(1_{\{\cdot<\sigma_k\wedge\iota_a\}}\bigr)_{s-}
\int_{L(r,k)<|x|\le a}
f_n(s,x,\omega)\,
(\mu-\mu_n)(\{s\},dx)
\to0
\qquad\text{a.s.}
\]

Since
$
\bigl(1_{\{\cdot<\sigma_k\wedge\iota_a\}}\bigr)_{s-}
$
is \(\mathcal F_{s-}\)-measurable, the conditional dominated convergence
theorem yields
\[
\bigl(1_{\{\cdot<\sigma_k\wedge\iota_a\}}\bigr)_{s-}
\int_{L(r,k)<|x|\le a}
f_n(s,x,\omega)\,
(\nu-\nu_n)(\{s\},dx)
\to0
\qquad\text{a.s.}
\]
for each such fixed jump time \(s\).

Set
\[
a_n(s)
:=
\bigl(1_{\{\cdot<\sigma_k\wedge\iota_a\}}\bigr)_{s-}
\int_{L(r,k)<|x|\le a}
f_n(s,x,\omega)\,
(\nu-\nu_n)(\{s\},dx).
\]
Then
$
        a_n(s)\to0
$
a.s. for each fixed \(s\) ranging over the relevant thin predictable set.

Apply Lemma~\ref{complemma} to
\(\{X^n\}_{n\ge1}\), and let
$
        \theta_{q,k,\ell,n}
$
denote the corresponding stopping times. Define
\[
        \theta^a_{q,k,\ell,n}
        :=
        \theta_{q,k,\ell,n}\wedge \iota_a .
\]
Then
\[
\sum_{s\le t}
\sup_{n\ge\ell}\bigl(1_{\{\cdot<\theta^a_{q,k,\ell,n}\}}\bigr)_{s-}
\left(
\int_{|x|>L(r,k)}\nu_{n}(\{s\},dx)
+
\int_{|x|>L(r,k)}\nu(\{s\},dx)
\right)
<\infty
\qquad \textsf{a.s.}
\]
If we let
\[
c_{q,\ell,n}(s)
:=
\bigl(1_{\{\cdot<\theta^a_{q,k,\ell,n}\}}\bigr)_{s-}
\left(
\int_{L(r,k)<|x|\le a}f_{n}(x,s,\omega)\,(\nu-\nu_{n})(\{s\},dx)
\right)^2.
\]
then
\[
\sum_{s\le t} c_{q,\ell,n}(s)
=
[H^{n}]_{t},
\]
on the event $\{\theta^a_{q,k,\ell,n}=\infty\}$.

Now let

\[
b(s)=\sup_{n\ge \ell} \bigl(1_{\{\cdot<\theta^a_{q,k,\ell,n}\}}\bigr)_{s-}
\left( \int_{L(r,k)<|x|\le a}m|x|\nu_{n}(\{s\},dx)+\int_{L(r,k)<|x|\le a}m|x|\nu(\{s\},dx)\right)^2
\]

then, by using Cauchy-Schwarz, $\nu_{n}(\{s\},\R)\le 1$, $\nu(\{s\},\R)\le 1$ a.s. and the elementary inequality $(a+b)\le 2a^2+2b^2$
\begin{align*}
b(s)&\le 2\sup_{n\ge \ell} \bigl(1_{\{\cdot<\theta^a_{q,k,\ell,n}\}}\bigr)_{s-}\left(\int_{L(r,k)<|x|\le a}m|x|\nu_{n}(\{s\},dx)\right)^2 
+2\sup_{n\ge \ell} \bigl(1_{\{\cdot<\theta^a_{q,k,\ell,n}\}}\bigr)_{s-}\left(\int_{L(r,k)<|x|\le a}m|x|\nu(\{s\},dx)\right)^2
\\
&=2\sup_{n\ge \ell} \bigl(1_{\{\cdot<\theta^a_{q,k,\ell,n}\}}\bigr)_{s-}\left( \int_{L(r,k)<|x|\le a}m|x|\nu_{n}(\{s\},dx)\right)^2 +2\sup_{n\ge \ell} \bigl(1_{\{\cdot<\theta^a_{q,k,\ell,n}\}}\bigr)_{s-}\left(\int_{L(r,k)<|x|\le a}m|x|\nu(\{s\},dx)\right)^2
\\
&\le 2\sup_{n\ge \ell} \bigl(1_{\{\cdot<\theta^a_{q,k,\ell,n}\}}\bigr)_{s-}\int_{L(r,k)<|x|\le a}m^2|x|^2\nu_{n}(\{s\},dx) +2\sup_{n\ge \ell} \bigl(1_{\{\cdot<\theta^a_{q,k,\ell,n}\}}\bigr)_{s-}\int_{L(r,k)<|x|\le a}m^2|x|^2\nu(\{s\},dx)
\\
&\le 2m^2a^2\sup_{n\ge \ell} \bigl(1_{\{\cdot<\theta^a_{q,k,\ell,n}\}}\bigr)_{s-}\left(\int_{|x|>L(r,k)}\nu_{n}(\{s\},dx) +\int_{|x|>L(r,k)}\nu(\{s\},dx)\right),
\end{align*}
which implies that $\sum_{s\le t}b(s)<\infty$ a.s.. Note that $c_{q,\ell,n}(s)=a_{n}(s)^2$ for $s<\theta^a_{q,k,\ell,n}$. Since \(c_{q,\ell,n}(s)\le b(s)\) for all \(n\ge \ell\), and
\(a_n(s)\to0\) a.s. as \(n\to\infty\), it follows from dominated
convergence for series that
\[
1_{\{\theta^a_{q,k,\ell,n}=\infty\}}
\sum_{s\le t} c_{q,\ell,n}(s)
=
[H^n]_t1_{\{\theta^a_{q,k,\ell,n}=\infty\}}
\xrightarrow[n\to\infty]{a.s.}
0.
\]
This implies that, for given $\epsilon,\epsilon'>0$, for $n\ge\ell$
\begin{align*}
\P\left( [H^{n}]_t\ge \epsilon \right)
&\le \P\left( 1_{\theta^a_{q,k,\ell,n}=\infty}[H^{n}]_t\ge \epsilon \right) +\P\left( \theta^a_{q,k,\ell,n}\le t\right)
\end{align*}
and by first letting $n\ge n_0$ the first term can be made less than $\epsilon'$ and then choosing $k_0,q_0,\ell_0$ the second term can be made less than $\epsilon'$ for $q\ge q_0$, $k\ge k_0$, $\ell\ge \ell_0$ and $a\ge a_0$. We conclude that the third term of \eqref{grisny2} converges to zero in probability. To conclude we have thus shown that 
$$\left[\int_0^. \int_{\R} f_n(s,x,\omega)(\mu_n-\nu_n)(ds,dx)-\int_0^. \int_{\R} f(s,x,\omega)(\mu-\nu)(ds,dx) \right]_t1_{B^n_a}\xrightarrow{\P}0, $$
implying
\begin{align*}
&\P\left( \left[\int_0^. \int_{\R} f_n(s,x,\omega)(\mu_n-\nu_n)(ds,dx)-\int_0^. \int_{\R} f(s,x,\omega)(\mu-\nu)(ds,dx) \right]_t\ge \epsilon\right)
\\
\le
&\P\left( \left[\int_0^. \int_{\R} f_n(s,x,\omega)(\mu_n-\nu_n)(ds,dx)-\int_0^. \int_{\R} f(s,x,\omega)(\mu-\nu)(ds,dx) \right]_t1_{B^n_a}\ge \epsilon\right)+\sup_{n'\in\N}\P\left((B^{n'}_a)^c \right)
\end{align*}
where the first term vanishes as $n\to\infty$ and the second as $a\to\infty$.
We now let 
$$T_k^n=\inf\left\{u>0:\left[\int_0^. \int_{\R} f_n(s,x,\omega)(\mu_n-\nu_n)(ds,dx)-\int_0^. \int_{\R} f(s,x,\omega)(\mu-\nu)(ds,dx) \right]_u\ge k\right\},$$
so that $\lim_{k\to\infty}\sup_n\P\left(T_k^n\le t \right)=0$. By the Markov Burkholder-Davis-Gundy inequality we have
\begin{align*}
&\P\left( \left(\int_0^. \int_{\R} f_n(s,x,\omega)(\mu_n-\nu_n)(ds,dx)-\int_0^. \int_{\R} f(s,x,\omega)(\mu-\nu)(ds,dx) \right)_{T_k}^*\ge \epsilon \right)
\\
\le &\frac{1}{\epsilon} \E\left[ \left(\int_0^. \int_{\R} f_n(s,x,\omega)(\mu_n-\nu_n)(ds,dx)-\int_0^. \int_{\R} f(s,x,\omega)(\mu-\nu)(ds,dx) \right)_{T_k}^*\right]
\\
\le &\frac{D}{\epsilon} \E\left[ \left[\int_0^. \int_{\R} f_n(s,x,\omega)(\mu_n-\nu_n)(ds,dx)-\int_0^. \int_{\R} f(s,x,\omega)(\mu-\nu)(ds,dx) \right]^{\frac12}_{T_k}\right]
\end{align*}
where the right-most side converges to zero by dominated convergence. Therefore
\begin{align*}
&\P\left( \left(\int_0^. \int_{\R} f_n(s,x,\omega)(\mu_n-\nu_n)(ds,dx)-\int_0^. \int_{\R} f(s,x,\omega)(\mu-\nu)(ds,dx) \right)_{t}^*\ge \epsilon \right)
\\
\le
&\P\left( \left(\int_0^. \int_{\R} f_n(s,x,\omega)(\mu_n-\nu_n)(ds,dx)-\int_0^. \int_{\R} f(s,x,\omega)(\mu-\nu)(ds,dx) \right)_{T_k}^*\ge \epsilon \right)
+\sup_{n'\in\N}\P\left(T_k^{n'}\le t \right)
\end{align*}
where the first term vanishes as $n\to\infty$ and the second term vanishes as $k\to\infty$

\end{proof}

\hfill

\end{document}